\newcommand{\norm}[1]{\left\lVert#1\right\rVert}
\newcommand{\adef}{\begin{defin}}
\newcommand{\zdef}{\end{defin}}
\newtheorem{defin}{Definition}
\newtheorem{theorem}{Theorem}[section]
\newtheorem{lemma}[theorem]{Lemma}
\newtheorem{proposition}[theorem]{Proposition}
\newtheorem{corollary}[theorem]{Corollary}
\numberwithin{equation}{section}
\numberwithin{equation}{section}
\newcommand{\vertiii}[1]{{\left\vert\kern-0.25ex\left\vert\kern-0.25ex\left\vert #1
    \right\vert\kern-0.25ex\right\vert\kern-0.25ex\right\vert}}
\def\block(#1,#2)#3{\multicolumn{#2}{c}{\multirow{#1}{*}{$ #3 $}}}
\DeclarePairedDelimiterX{\inp}[2]{\langle}{\rangle}{#1, #2}
\newcommand{\abs}[1]{\lvert#1\rvert}
\theoremstyle{definition}
\newtheorem{example}[theorem]{Example}
\newcommand{\N}{\mathbb{N}}
\theoremstyle{remark}
\newcommand{\Q}{\mathbb{Q}}
\newcommand{\R}{\mathbb{R}}
\newcommand{\C}{\mathbb{C}}
\newcommand{\K}{\mathbb{K}}
\newcommand{\Ll}{\mathcal{L}}
\numberwithin{equation}{section}
\renewcommand{\tocsection}[3]{%
	\indentlabel{\@ifnotempty{#2}{\bfseries\ignorespaces#1 #2\quad}}\bfseries#3}
\renewcommand{\tocsubsection}[3]{%
	\indentlabel{\@ifnotempty{#2}{\ignorespaces#1 #2\quad}}#3}
\newcommand\@dotsep{4.5}
\def\@tocline#1#2#3#4#5#6#7{\relax
	\ifnum #1>\c@tocdepth 
	\else
	\par \addpenalty\@secpenalty\addvspace{#2}%
	\begingroup \hyphenpenalty\@M
	\@ifempty{#4}{%
		\@tempdima\csname r@tocindent\number#1\endcsname\relax
	}{%
		\@tempdima#4\relax
	}%
	\parindent\z@ \leftskip#3\relax \advance\leftskip\@tempdima\relax
	\rightskip\@pnumwidth plus1em \parfillskip-\@pnumwidth
	#5\leavevmode\hskip-\@tempdima{#6}\nobreak
	\leaders\hbox{$\m@th\mkern \@dotsep mu\hbox{.}\mkern \@dotsep mu$}\hfill
	\nobreak
	\hbox to\@pnumwidth{\@tocpagenum{\ifnum#1=1\bfseries\fi#7}}\par
	\nobreak
	\endgroup
	\fi}
\renewcommand\csname r@tocindent0\endcsname{0pt}
\def\l@subsection{\@tocline{2}{0pt}{2.5pc}{5pc}{}}
\begin{document}

	\title[Twisted Hilbert spaces defined by bi-Lipschitz maps]{Twisted Hilbert spaces defined by bi-Lipschitz maps}

		\author[Corrêa]{Willian Corrêa }
	\address[Corrêa]{Departamento de Matemática, Instituto de Ciências Matemáticas e de Computação, Universidade de São Paulo, Avenida Trabalhador São-carlense, 400 -
Centro, CEP: 13566-590, São Carlos, SP. \newline
		\href{https://orcid.org/0000-0003-2172-4019}{ORCID: \texttt{0000-0003-2172-4019} }}
	\email{\texttt{willhans@icmc.usp.br}}

		\author[Dantas]{Sheldon Dantas}
	\address[Dantas]{Departamento de Análisis Matemático, Facultad de Ciencias Matemáticas, Universidad de Valencia, Doctor Moliner 50, 46100 Burjasot (Valencia), Spain. \newline
		\href{http://orcid.org/0000-0001-8117-3760}{ORCID: \texttt{0000-0001-8117-3760} } }
	\email{\texttt{sheldon.dantas@uv.es}}

\author[Rodríguez-Vidanes]{Daniel L. Rodríguez-Vidanes}
	\address[Rodríguez-Vidanes]{Grupo de Investigación: Análisis Matemático y Matemática Aplicada, Departamento de Matemática Aplicada a la Ingeniería Industrial, Escuela Técnica Superior de Ingeniería y Diseño Industrial, Ronda de Valencia 3, Universidad Politécnica de Madrid,
		Madrid, 28012, Spain \newline
		\href{http://orcid.org/0000-0000-0000-0000}{ORCID: \texttt{0000-0003-2240-2855} }}
	\email{\texttt{dl.rodriguez.vidanes@upm.es}}

	\begin{abstract} We obtain an infinite-dimensional cone of singular twisted Hilbert spaces $Z(\varphi)$ which are isomorphic to their duals but not to their conjugate duals. We do that by showing that the subset of all bi-Lipschitz maps from $[0, \infty)$ to $\R$ is coneable. We also provide a characterization of the Kalton-Peck space among all twisted Hilbert spaces of the form $Z(\varphi)$, which gives a partial answer to a conjecture of F. Cabello S\'anchez and J. Castillo.
	\end{abstract}

	\thanks{ }
	
	\subjclass[2020]{46B70, 46B87, 46M18}
	\keywords{twisted sums; bi-Lipschitz maps; coneability; Kalton-Peck space}
	
	\maketitle

	\thispagestyle{plain}


\section{Introduction}

The Kalton-Peck space is a remarkable example of a Banach space with exotic properties. It was the second example of a nontrivial (separable) twisted Hilbert space, i.e., a Banach space $X$ containing an isomorphic copy of $\ell_2$ such that $X/\ell_2$ is also isomorphic to $\ell_2$ and $X$ non-Hilbertian. The first example was given by P. Enflo, J. Lindenstrauss and G. Pisier in \cite{ELP} and we will denote it by $ELP$, for short. The construction of $ELP$ is local in nature, in the sense that it is of the form $\ell_2(F_n)$, where each $F_n$ is a finite dimensional Banach space that are less and less Hilbertian. One of the consequences of $ELP$'s local nature is that, even though it is a nontrivial twisted Hilbert space, $ELP$ is full of complemented copies of $\ell_2$ (\cite[Section 4]{CMS}, or \cite[Section 10.9]{Homological}; see also \cite{Castillo} for a relevant reference to the general theory and open problems). On the other hand, the quotient map $q : Z_2 \rightarrow \ell_2$ is strictly singular and $Z_2^* \cong Z_2$, and therefore, the inclusion $i : \ell_2 \rightarrow Z_2$ is strictly cosingular. The space $Z_2$ is then an extreme example of a twisted Hilbert space.

More generally, a \emph{twisted sum} of Banach spaces $Y$ and $Z$ is a Banach space $X$ containing an isomorphic copy of $Y$ such that the corresponding quotient is isomorphic to $Z$. We represent this by using a short exact sequence
\[
\xymatrix{0 \ar[r] & Y \ar[r]^i & X \ar[r]^q & Z \ar[r] & 0.}
\]
The twisted sum $X$ is said to be \emph{nontrivial} if the copy of $Y$ is not complemented in $X$. If $q$ is strictly singular, then $X$ is said to be a \emph{singular twisted sum}.

A great deal of work has been done in trying to determine the singularity of twisted sums (see, for instance, \cite{Cabello_Factorization, Singular, CMS, TypeCotype, JesusKalton-Peck, Cabello_no_singular, Cabello_sso}). The present paper is a contribution to the family of examples of singular twisted Hilbert spaces. Actually, the spaces we present will have the following stronger property: let $\ell_{\Phi}$ be the Orlicz sequence space corresponding to a convex function which is equivalent to $t^2 \log^2 t$ near $0$. Then any normalized basic sequence in $Z_2$ has a subsequence equivalent to the canonical basis of either $\ell_2$ or $\ell_{\Phi}$. We call that property \emph{$\Phi$-stability}




In \cite{KP}, Kalton and Peck show that, given an unbounded Lipschitz map $\varphi : [0, \infty) \rightarrow \R$, one can obtain a twisted Hilbert space $Z(\varphi)$ and the Kalton-Peck space $Z_2$ corresponds to the choice $\varphi(t) = ct$ for any $c \neq 0$. Showing that there is an infinite-dimensional cone of bi-Lipschitz functions from $[0, \infty)$ to $\R$, we obtain an ``infinite-dimensional cone'' of singular twisted Hilbert spaces all of which are $\Phi$-stable. We conclude the paper by providing a characterization of the Kalton-Peck space among all spaces $Z(\varphi)$ similar to the the characterization of $c_0$ and $\ell_p$ ($1 \leq p < \infty$) as the only Banach spaces with a basis which is equivalent to all of its normalized block bases. This partially answers a conjecture by Cabello Sánchez and Castillo in \cite[page 517]{Homological}.

\section{Twisted sums induced by Lipschitz maps}\label{Sec:Background}

Let $\mathbb{K}$ be either the field of real or complex numbers. Recall that a \emph{quasi-norm} on a $\mathbb{K}$-vector space $X$ is a function $\|\cdot\| : X \rightarrow [0, \infty)$ satisfying the following properties:
\begin{enumerate}
    \item $\|x\| = 0 \iff x = 0$,
    \item $\|\lambda x\| = \abs{\lambda} \|x\|$ for every $x \in X$, $\lambda \in \K$ and
    \item there is a constant $C \geq 1$ such that for every $x, y \in X$, we have $\|x + y\| \leq C (\|x\| + \|y\|)$.
\end{enumerate}
The space $X$ endowed with a quasi-norm is called a \emph{quasi-normed space}. The Aoki-Rolewicz theorem says that there are $p \in (0, 1]$ and an equivalent quasi-norm $|||\cdot|||$ on $X$ such that $d(x, y) = |||x - y|||^p$ defines a distance on $X$. If that distance is complete, we say that $X$ is a \emph{quasi-Banach space} and that the original quasi-norm is complete. A twisted sum of $Y$ and $Z$ is a quasi-normed space $X$ such that it contains an isomorphic copy of $Y$ and the respective quotient is isomorphic to $Z$. It is possible to prove that $X$ is a Banach space. If $i(Y) \subset X$ is not complemented, we say that $X$ is a \emph{nontrivial twisted sum}. If $Y = Z = \ell_2$, then we say that $X$ is a (separable) \emph{twisted Hilbert space}. If $i$ is strictly cosingular, we say that $X$ is a \emph{cosingular twisted sum}. If $q$ is strictly singular, then $X$ is said to be a \emph{singular twisted sum}.

In \cite{KP}, Kalton and Peck provide a whole factory of twisted Hilbert spaces by making use of Lipschitz maps. If $\varphi : [0, \infty) \rightarrow \K$ is a Lipschitz map with $\varphi(0) = 0$, we define $\Omega_{\varphi} : c_{00} \rightarrow \ell_2$ by
\[
\Omega_{\varphi}(x) = \sum_{n=1}^\infty x_n \varphi\left(-\log \frac{\abs{x_n}}{\|x\|_2}\right) e_n 
\]
where $(e_n)$ is the canonical basis of $\ell_2$, with the understanding that $\Omega_{\varphi}(0) = 0$. In this case, we have that $\Omega_{\varphi}$ is a \emph{quasi-linear map}. This means that it is homogeneous and there is $C > 0$ such that
\[
\|\Omega_{\varphi}(x + y) - \Omega_{\varphi}(x) - \Omega_{\varphi}(y)\|_2 \leq C(\|x\|_2 + \|y\|_2).
\]
This implies that if we define $\|\cdot\|_{\varphi} : \ell_2 \times c_{00} \rightarrow [0, \infty)$ by
\[
\|(y, x)\|_{\varphi} := \|y - \Omega_{\varphi}(x)\|_2 + \|x\|_2
\]
then we have that $\|\cdot\|_{\varphi}$ is a quasi-norm. If we denote by $Z(\varphi)$ the completion of $(\ell_2 \times c_{00}, \|\cdot\|_{\varphi})$, then we obtain a short exact sequence
\[
\xymatrix{0 \ar[r] & \ell_2 \ar[r]^i & Z(\varphi) \ar[r]^q & \ell_2 \ar[r] & 0}
\]
where $i(y) = (y, 0)$ and $q(y, x) = x$. If $\varphi$ is unbounded, then $Z(\varphi)$ is a nontrivial twisted Hilbert space, i.e., $i(\ell_2)$ is not complemented in $Z(\varphi)$. 


Let $X_1$ and $X_2$ be two twisted sums of $Y$ and $Z$. We say that $X_1$ and $X_2$ are \emph{equivalent} if there is a commutative diagram
\[
\xymatrix{0 \ar[r] & Y \ar[r]\ar@{=}[d] & X_1 \ar[r]\ar[d]^T & Z \ar[r]\ar@{=}[d] & 0 \\
0 \ar[r] & Y \ar[r] & X_2 \ar[r] & Z \ar[r] & 0}
\]
where $T$ is an isomorphism. The exact sequences at $X_1$ and $X_2$ are said to be \emph{projectively equivalent} if there is a commutative diagram
\[
\xymatrix{0 \ar[r] & Y \ar[r]\ar[d]^{\alpha Id_Y} & X_1 \ar[r]\ar[d]^T & Z \ar[r]\ar[d]^{\beta Id_Z} & 0 \\
0 \ar[r] & Y \ar[r] & X_2 \ar[r] & Z \ar[r] & 0}
\]
where $T$ is an isomorphism and $\alpha, \beta \neq 0$. The Lipschitz maps $\varphi, \psi : [0, \infty) \rightarrow \K$ with $\varphi(0) = \psi(0) = 0$ are said to be \emph{equivalent} whenever
\[
\sup_{t > 0} \left|\varphi(t) - \psi(t)\right| < \infty.
\]
The maps $\varphi$ and $\psi$ are \emph{projectively equivalent} (or, which is the same, that $\Omega_\varphi$ and $\Omega_\psi$ are projectively equivalent) if $\varphi$ is equivalent to $c\psi$, for some $c \neq 0$. The twisted sums $Z(\varphi)$ and $Z(\psi)$ are (projectively) equivalent if and only if $\varphi$ and $\psi$ are (projectively) equivalent. If we have a twisted Hilbert space  
\[
\xymatrix{0 \ar[r] & \ell_2 \ar[r]^i & X \ar[r]^q & \ell_2 \ar[r] & 0}
\]
then its dual is also a twisted Hilbert space with
\[
\xymatrix{0 \ar[r] & \ell_2 \ar[r]^{q^*} & X^* \ar[r]^{i^*} & \ell_2 \ar[r] & 0.}
\]
In the case of twisted Hilbert spaces induced by Lipschitz maps, the dual of $Z(\varphi)$ is equivalent to $Z(-\varphi)$ and the duality action is given by $\langle (x, y) , (w, z) \rangle = \langle y, w \rangle + \langle x, z \rangle$ (for a more general result regarding twisted sums induced by interpolation, see \cite[Section 5]{HigherOrderUs}). In particular, $Z(\varphi)$ is isomorphic to $Z(\varphi)^*$. The sequence
\[
\mathcal{E} := \{(e_1, 0), (0, e_1), (e_2, 0), (0, e_2), \ldots\}
\]
is a normalized basis for $Z(\varphi)$, which is unconditional if and only if $Z(\varphi)$ is trivial (that is, Hilbertian) \cite{Kalton_Unconditional}. On the other hand, if we let $E_n = [(e_n, 0), (0, e_n)]$, then $(E_n)_n$ defines a $2$-dimensional UFDD of $Z(\varphi)$ (as in \cite[page 30]{KP}). Another fact (that follows immediately from the boundedness of the function $t \mapsto t \log t$ on $(0, 1)$) that we will use in the paper is that $\Omega_{\varphi}: \ell_2 \rightarrow c_{00}$ is bounded.

\section{Twisted sums induced by bi-Lipschitz maps}\label{sec:bi_lip}

In this section, we show that whenever $\varphi$ is a bi-Lipschitz map, then $Z(\varphi)$ is very similar to the Kalton-Peck space $Z_2$. In order to prove the results in this section, we use similar arguments to the ones from \cite{KP}. Nevertheless, we present all the details for the sake of completeness.

Let us recall that, given two metric spaces $X$ and $Y$, a map $f: X \rightarrow Y$ is said to be {\it Lipschitz} if there exists a constant $a \geq 0$ such that $d(f(x),f(y)) \leq a \cdot d(x,y)$ for every $x,y \in X$. If there is also $b > 0$ such that $d(f(x), f(y)) \geq b \cdot d(x,y)$ for all $x,y \in X$, then we will call $f$ a {\it bi-Lipschitz} map. We denote by $\Ll$ the class of all Lipschitz maps $\varphi : [0, \infty) \rightarrow \K$ with $\varphi(0) = 0$ such that $\varphi(t) \rightarrow \infty$ as $t \rightarrow \infty$ and by $\Ll_{bi}$ the subclass of $\Ll$ consisting of all bi-Lipschitz maps.


\begin{lemma}\label{lem:Orlicz_subspace}
    Let $\varphi \in \Ll_{bi}$. Consider $(v_n)$ to be a normalized block sequence in $\ell_2$ and let $w_n = (\Omega_{\varphi}(v_n), v_n) \in Z(\varphi)$. Then, we have that $(w_n)$ is a basic sequence equivalent to the basis of the Orlicz sequence space $\ell_{\Phi}$.
\end{lemma}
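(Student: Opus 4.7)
The plan is to reduce the norm of an arbitrary finite linear combination $\sum_{k=1}^{N} a_k w_k$ in $Z(\varphi)$ to an expression of the shape
\[
\|a\|_2 + \Bigl(\sum_k |a_k|^2 \log^2(|a_k|/\|a\|_2)\Bigr)^{1/2},
\]
which is a classical equivalent norm on $\ell_{\Phi}$ for $\Phi(t)\sim t^{2}\log^{2}t$ near $0$. Because the definition $\|(y,x)\|_\varphi=\|y-\Omega_\varphi(x)\|_2+\|x\|_2$ makes $w_n=(\Omega_\varphi(v_n),v_n)$ normalized, and because the $v_n$'s are disjointly supported unit vectors (so $\|\sum a_k v_k\|_2=\|a\|_2$), the computation reduces to
\[
\|\textstyle\sum a_k w_k\|_\varphi \;=\; \|a\|_2 \;+\; \Bigl\|\sum_k a_k\Omega_\varphi(v_k)-\Omega_\varphi\bigl(\sum_k a_k v_k\bigr)\Bigr\|_2.
\]

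First I would write $v_k=\sum_{n\in I_k} v_k(n)e_n$ with the $I_k$ pairwise disjoint, set $s_k=-\log(|a_k|/\|a\|_2)\ge 0$ and $t_{k,n}=-\log|v_k(n)|\ge 0$, and observe that the $n$-th coordinate ($n\in I_k$) of the vector inside the norm is
\[
a_k v_k(n)\bigl(\varphi(t_{k,n})-\varphi(s_k+t_{k,n})\bigr).
\]
Squaring, summing, and using $\sum_{n\in I_k}|v_k(n)|^2=1$, the disjoint-support structure converts the $\ell_2$-norm of the error into
\[
\sum_{k} |a_k|^2\bigl(\text{weighted average of }|\varphi(s_k+t_{k,n})-\varphi(t_{k,n})|^{2}\text{ over }n\in I_k\bigr).
\]
Now the bi-Lipschitz hypothesis gives the two-sided estimate $b\,s_k\le|\varphi(s_k+t_{k,n})-\varphi(t_{k,n})|\le L\,s_k$ uniformly in $n$, so the whole error sum is comparable, up to multiplicative constants depending only on $b,L$, to $\sum_k|a_k|^2 s_k^{2}=\sum_k|a_k|^2\log^2(|a_k|/\|a\|_2)$. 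This yields the two-sided bound
\[
c_1\Bigl(\|a\|_2+\bigl(\textstyle\sum|a_k|^2\log^2(|a_k|/\|a\|_2)\bigr)^{1/2}\Bigr)\le \|\textstyle\sum a_k w_k\|_\varphi \le c_2\Bigl(\|a\|_2+\bigl(\textstyle\sum|a_k|^2\log^2(|a_k|/\|a\|_2)\bigr)^{1/2}\Bigr).
\]

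Next I would identify the right-hand quantity with the $\ell_\Phi$-norm. This is the standard computation: for $\Phi$ equivalent to $t^{2}\log^{2}t$ near $0$, the Luxemburg norm of $(a_k)$ is equivalent to $\|a\|_2+(\sum|a_k|^2\log^2(|a_k|/\|a\|_2))^{1/2}$, which one checks by homogenizing, dividing through by $\|a\|_2$, and comparing to $\sum\Phi(|a_k|/\lambda)=1$. This is exactly the calculation used by Kalton and Peck for $\varphi(t)=t$, and it transfers verbatim because the dependence on $\varphi$ has been absorbed into the constants. The fact that $(w_n)$ is in particular a basic sequence is immediate from these equivalences (or, alternatively, from its being a block sequence of the $2$-dimensional UFDD $(E_n)$ mentioned in Section~\ref{Sec:Background}).

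The main technical obstacle is the precise bookkeeping of the error term: one must verify that the estimate $|\varphi(s_k+t_{k,n})-\varphi(t_{k,n})|\asymp s_k$ can be applied uniformly, even when $t_{k,n}$ is very large (i.e.\ $|v_k(n)|$ very small) or when $s_k=0$ (i.e.\ $|a_k|=\|a\|_2$, which forces only one $a_k$ nonzero and makes the error vanish). The bi-Lipschitz property is precisely the ingredient that makes both inequalities work at once, and it is the only place in the argument where the lower Lipschitz constant $b>0$ is used; the upper estimate alone would hold for any $\varphi\in\Ll$.
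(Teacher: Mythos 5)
Your proposal is correct and follows essentially the same route as the paper: expand $\bigl\|\sum t_n w_n\bigr\|_\varphi$ coordinatewise, observe that the two arguments of $\varphi$ differ by exactly $\log(\|t\|_2/|t_n|)$, and use the two-sided Lipschitz bounds to compare the error term with $\sum |t_n|^2\log^2(\|t\|_2/|t_n|)$. The only difference is cosmetic: the paper passes through reflexivity and bounded completeness to handle convergence of the infinite series, whereas you establish the two-sided equivalence directly on finite linear combinations, which is if anything cleaner.
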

\begin{proof}
Let $(w_n)_{n \geq 1}$ be a block sequence of $\mathcal{E}$. Write $v_n = \sum\limits_{k=l_{n-1}+1}^{l_n} v_n(k) e_k$ and let $t \in c_{00}$. We have that 
\begin{eqnarray*}
\norm{\sum_{n=1}^\infty t_n w_n}_{\varphi} 
& = & \norm{\sum_{n=1}^\infty \sum\limits_{k=l_{n-1}+1}^{l_n} t_n v_n(k) \left(\varphi\left(\log \frac{1}{\abs{v_n(k)}}\right)  - \varphi\left(\log \frac{\|t\|_2}{\abs{t_n v_n(k)}}\right)\right)}_2 + \norm{t}_2.
\end{eqnarray*}
Being a twisted sum of $\ell_2$, we have that $Z(\varphi)$ is also reflexive (since being reflexive is a three space property \cite{3SP}) and then $(w_n)$ is boundedly complete. Therefore, given a sequence of scalars $(t_n)$, we have that $\sum t_n w_n$ converges in $Z(\varphi)$ if and only if
\[
\sup_N \norm{\sum_{n=1}^N \sum\limits_{k=l_{n-1}+1}^{l_n} t_n v_n(k) \left(\varphi\left(\log \frac{1}{\abs{v_n(k)}}\right)  - \varphi\left(\log \frac{\|t\|_2}{\abs{t_n v_n(k)}}\right)\right)}_2 + \|t\|_2 < \infty.
\]
Since $\varphi$ is bi-Lipschitz, we have that 
\[
\left(\sum_{n=1}^\infty \abs{t_n}^2 \left|\log \frac{\|t\|_2}{\abs{t_n}} \right|^2 \right)^{\frac{1}{2}} + \|t\|_2 < \infty,
\]
which means in turn that $(w_n)$ is equivalent to the canonical basis of $\ell_{\Phi}$.
\end{proof}

Recall from the Introduction that the Kalton-Peck space is $\Phi$-stable.

\begin{theorem}\label{thm:good_blocks}
Let $\varphi \in \mathcal{L}_{bi}$. Then $\ell_{\varphi}$ is $\Phi$-stable. More precisely, every normalized basic sequence $(w_n)$ in $Z(\varphi)$ has a subsequence equivalent to the canonical basis of either $\ell_2$ or $\ell_{\Phi}$. In particular, if $w_n = (u_n, v_n)$ and $\inf \|v_n\| > 0$, then $(w_n)$ has a subsequence equivalent to the canonical basis of $\ell_{\Phi}$.
\end{theorem}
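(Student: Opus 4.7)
The plan is to imitate the Kalton--Peck stability argument used to prove Lemma~\ref{lem:Orlicz_subspace}, with a case split on the behaviour of the ``second coordinate'' of $(w_n)$. Throughout, I use the $2$-dimensional UFDD $(E_n)$ of $Z(\varphi)$ from Section~\ref{Sec:Background}. The first move is a standard reduction: because $(E_n)$ is an UFDD, a gliding-hump/Bessaga--Pe\l czy\'nski argument produces a subsequence arbitrarily close, in the basis-equivalence sense, to a normalized block sequence with respect to $(E_n)$. After this perturbation I may assume that $w_n=(u_n,v_n)$ with $u_n,v_n\in c_{00}$ supported in pairwise disjoint intervals $I_n\subset\mathbb{N}$, and $\|u_n-\Omega_{\varphi}(v_n)\|_2+\|v_n\|_2=1$. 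Setting $a_n:=\|v_n\|_2\in[0,1]$, I pass to a further subsequence so that $a_n\to a\in[0,1]$.

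If $a=0$, then the boundedness of $\Omega_{\varphi}\colon\ell_2\to\ell_2$ recalled at the end of Section~\ref{Sec:Background} gives $\|\Omega_\varphi(v_n)\|_2\to 0$, so $w_n$ is a norm-negligible perturbation of $(y_n,0)\in i(\ell_2)$, where $y_n:=u_n-\Omega_{\varphi}(v_n)$ are disjointly supported in $\ell_2$ with $\|y_n\|_2\to 1$. A disjoint-support argument in $\ell_2$ shows that the normalized vectors $(y_n,0)/\|y_n\|_2$ are isometrically equivalent to the canonical basis of $\ell_2$, and the small-perturbation principle transfers this equivalence to a tail of $(w_n)$.

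If $a>0$, then $a_n\geq\delta>0$ for $n$ large. Set $\tilde v_n:=v_n/a_n$ and $\tilde w_n:=(\Omega_{\varphi}(\tilde v_n),\tilde v_n)$; homogeneity of $\Omega_{\varphi}$ gives $\Omega_{\varphi}(v_n)=a_n\Omega_{\varphi}(\tilde v_n)$ and hence the decomposition
$$w_n=(y_n,0)+a_n\tilde w_n,\qquad y_n=u_n-\Omega_{\varphi}(v_n),\qquad \|y_n\|_2=1-a_n.$$
Lemma~\ref{lem:Orlicz_subspace} applied to the normalized block sequence $(\tilde v_n)$ asserts that $(\tilde w_n)$ is equivalent to the canonical basis of $\ell_{\Phi}$, and since $a_n\in[\delta,1]$ the same holds for $(a_n\tilde w_n)$. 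The upper bound $\|\sum t_n w_n\|_{\varphi}\lesssim\|(t_n)\|_{\Phi}$ then follows from the triangle inequality in $Z(\varphi)$ together with the continuous inclusion $\ell_{\Phi}\hookrightarrow\ell_2$ (consequence of $\Phi(t)/t^2\to\infty$ as $t\to 0^+$), which absorbs the $\ell_2$-contribution $\|\sum t_n y_n\|_2\leq\|t\|_2$ coming from the $(y_n,0)$ summands. For the matching lower bound, I return to the defining formula $\|\sum t_n w_n\|_{\varphi}=\|\sum t_n u_n-\Omega_{\varphi}(V)\|_2+\|V\|_2$ with $V:=\sum t_n v_n$, expand $\Omega_{\varphi}(V)$ on each block $I_n$ via
$$\log\frac{\|V\|_2}{|t_n v_n(k)|}=\log\frac{\|v_n\|_2}{|v_n(k)|}+\log\frac{\|V\|_2}{|t_n|\,\|v_n\|_2},$$
and invoke the \emph{bi}-Lipschitz property of $\varphi$ to extract a term proportional to $|\log(\|V\|_2/(|t_n|a_n))|$; squaring and summing over $k\in I_n$ and over $n$ reproduces the $\ell_{\Phi}$-expression from Lemma~\ref{lem:Orlicz_subspace}.

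The main obstacle is this last lower bound: the ``error'' $(y_n,0)$ is a genuine $\ell_2$-vector of norm $1-a_n$ that could \emph{a priori} partially cancel the dominant $\ell_{\Phi}$-type contribution, so triangle-inequality subtraction alone cannot close the argument. The bi-Lipschitz hypothesis is precisely what rules out such cancellation, since the lower Lipschitz bound forces a strictly positive quadratic-log contribution to $\|\sum t_n u_n-\Omega_{\varphi}(V)\|_2$ that no pure $\ell_2$-perturbation can kill. The ``in particular'' clause of the theorem is then immediate from the second case, applied with $\delta=\inf_n\|v_n\|_2$.
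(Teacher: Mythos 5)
Your overall strategy (reduce to a block sequence of the UFDD $(E_n)$, split on $a=\lim_n\|v_n\|_2$, and quote Lemma~\ref{lem:Orlicz_subspace} when $a>0$) is the standard Kalton--Peck route and is the right one, but two steps as written do not hold up. First, in the case $a=0$ you justify the perturbation by asserting that $\Omega_{\varphi}$ is bounded from $\ell_2$ to $\ell_2$, so that $\|\Omega_{\varphi}(v_n)\|_2\to 0$. This is false: $\Omega_{\varphi}$ is unbounded into $\ell_2$ whenever $\varphi$ is unbounded (take $v=n^{-1/2}\sum_{j\le n}e_j$, for which $\|\Omega_{\varphi}(v)\|_2=|\varphi(\log\sqrt n)|$); if it were bounded, $Z(\varphi)$ would be trivial. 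What Section~\ref{Sec:Background} records is only a coordinatewise bound of the type $\|\Omega_{\varphi}(x)\|_\infty\lesssim\|x\|_2$. The step is salvageable without any boundedness: directly from the definition of the quasi-norm, $\|w_n-(y_n,0)\|_{\varphi}=\|(\Omega_{\varphi}(v_n),v_n)\|_{\varphi}=\|v_n\|_2\to 0$, and a subsequence with summable $\|v_{n_k}\|_2$ feeds the small-perturbation principle. You should make this correction, since as stated the step rests on a false fact.

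Second, and more seriously, the lower bound $\|\sum_n t_nw_n\|_{\varphi}\gtrsim\|t\|_\Phi$ in the case $a>0$ --- which you yourself call ``the main obstacle'' --- is asserted rather than proved: the sentence that the lower Lipschitz bound ``forces a strictly positive quadratic-log contribution that no pure $\ell_2$-perturbation can kill'' is the claim, not an argument. To close it you need to (i) use disjointness of supports to write $\|\sum_nt_nu_n-\Omega_{\varphi}(V)\|_2^2$ as $\sum_n\|t_ny_n+(t_n\Omega_{\varphi}(v_n)-\Omega_{\varphi}(V)|_{I_n})\|_2^2$; (ii) note that on $I_n$ the two arguments of $\varphi$ differ by exactly $\Delta_n:=\log\bigl(\|V\|_2/(|t_n|a_n)\bigr)\ge 0$, so the commutator term has $\ell_2$-norm between $b|t_n|a_n\Delta_n$ and $L|t_n|a_n\Delta_n$, while the perturbation contributes only $|t_n|(1-a_n)\le|t_n|$; and (iii) split the indices according to whether $\log(\|t\|_2/|t_n|)$ exceeds a fixed large $M$: since $\Delta_n\ge\log(\|t\|_2/|t_n|)-\log(1/\delta)$, on the ``large'' blocks the reverse triangle inequality yields a contribution $\gtrsim|t_n|\log(\|t\|_2/|t_n|)$, whereas the ``small'' blocks contribute at most $M^2\|t\|_2^2$ to $\sum_n|t_n|^2\log^2(\|t\|_2/|t_n|)$ and are absorbed by $\|V\|_2\ge\delta\|t\|_2$. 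Without this index splitting, the cancellation you worry about is not actually excluded, so the proposal has a genuine gap at its central estimate.
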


By using \cite[Proposition 6.9]{Singular}, we have that whenever $\varphi$ is expansive, $Z(\varphi)$ is singular. Theorem \ref{thm:good_blocks} gives us an alternative proof.


\begin{theorem}\label{thm:singularity}
Let $\varphi \in \mathcal{L}_{bi}$. Then, $Z(\varphi)$ is a singular and cosingular twisted Hilbert space.
\end{theorem}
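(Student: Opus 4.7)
The plan is to deduce both properties from Theorem \ref{thm:good_blocks}, establishing singularity directly and cosingularity by duality.

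For the strict singularity of $q$, I would argue by contradiction. Suppose $q$ restricts to an isomorphism on some infinite-dimensional closed subspace $W \subseteq Z(\varphi)$. Since $Z(\varphi)$ is reflexive (reflexivity is a three-space property and $\ell_2$ is reflexive), $W$ contains a normalized, weakly null basic sequence $(w_n)$. Writing $w_n = (u_n, v_n)$, the fact that $q|_W$ is an isomorphism gives $\inf_n \|v_n\|_2 > 0$. By Theorem \ref{thm:good_blocks}, after passing to a subsequence, $(w_n)$ is equivalent to the canonical basis of $\ell_\Phi$. On the other hand, $(v_n) = (q(w_n))$ is weakly null and seminormalized in $\ell_2$, so by the Bessaga--Pelczynski selection principle a further subsequence is equivalent, up to normalization, to a block basis of the canonical basis of $\ell_2$, and hence to the canonical $\ell_2$-basis. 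Since $q|_W$ is an isomorphism, the corresponding subsequences of $(w_n)$ and $(v_n)$ are equivalent as basic sequences, which forces the canonical bases of $\ell_\Phi$ and $\ell_2$ to be equivalent; this contradicts the fact that $\Phi \sim t^2 \log^2 t$ is not equivalent to $t^2$ near $0$.

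For the strict cosingularity of $i$, I would use duality. Since $-\varphi$ is trivially bi-Lipschitz whenever $\varphi$ is, applying the previous step to $-\varphi$ shows that the quotient map $Z(-\varphi) \to \ell_2$ is strictly singular. Under the identification $Z(\varphi)^* \cong Z(-\varphi)$ recalled in Section \ref{Sec:Background}, this quotient map coincides with the adjoint $i^*: Z(\varphi)^* \to \ell_2$. Since $\ell_2$ and $Z(\varphi)$ are both reflexive, closed subspaces of $Z(\varphi)^*$ are exactly the annihilators of closed subspaces of $Z(\varphi)$, and a standard duality argument therefore yields that $i$ is strictly cosingular whenever $i^*$ is strictly singular, concluding the proof.

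The hard part will be the first step. Merging the two competing basic-sequence characterizations — Theorem \ref{thm:good_blocks} producing $\ell_\Phi$-equivalent subsequences in $Z(\varphi)$ and Bessaga--Pelczynski producing $\ell_2$-equivalent subsequences in $\ell_2$ — requires careful bookkeeping to ensure the same subsequence inherits both equivalences via $q|_W$, and a clean argument (using subsymmetry of the $\ell_\Phi$ basis) that transferring down to a sub-subsequence does not destroy $\ell_\Phi$-equivalence. The cosingularity step is then essentially a formal duality consequence.
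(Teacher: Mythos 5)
Your proof is correct and follows essentially the same route as the paper: a contradiction between Theorem \ref{thm:good_blocks} (forcing an $\ell_\Phi$-subsequence when $\inf\|v_n\|>0$) and the $\ell_2$-behaviour imposed by $q|_W$ being an isomorphism, with cosingularity obtained by duality via $Z(\varphi)^*\cong Z(-\varphi)$. The only difference is cosmetic: the paper avoids your Bessaga--Pe{\l}czy\'nski/subsequence bookkeeping by simply pulling back an orthonormal basis of $q(W)$ through $(q|_W)^{-1}$, which immediately yields a basic sequence equivalent to the canonical $\ell_2$-basis with $\inf\|q(w_n)\|>0$.
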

\begin{proof}
Let $q : Z(\varphi) \rightarrow \ell_2$ be the quotient map and assume that it is not strictly singular. So there is a subspace $W \subset Z(\varphi)$ such that $q|_W$ is an isomorphism onto its image $q(W)$. Since $q(W) \subset \ell_2$, it has an orthonormal basis $f_n$. It follows $w_n = (q|_W)^{-1}(f_n)$ is a basic sequence in $Z(\varphi)$ equivalent to the canonical basis of $\ell_2$ but $\inf \|q(w_n)\| > 0$, which contradicts Theorem \ref{thm:good_blocks}. The cosingularity now follows from the duality.
\end{proof}

\vspace{0.2cm}
To finish this section, it is worth recalling the alternative representation of $Z_2$ as a twisted sum
\[
\xymatrix{0 \ar[r] & \ell_{\Phi} \ar[r] & Z_2 \ar[r] & \ell_{\Phi}^* \ar[r] & 0.}
\]
For this, we refer to \cite[Section 5.3]{CCFG}, \cite[Proposition 3.6]{Symmetries} and \cite[Facts 1.1]{OperatorsZ2}. See also \cite{Quasilinear_Inversion}. Similarly, we have:
\begin{proposition}
Let $\varphi \in \mathcal{L}_{bi}$. Then, we have the short exact sequence
\[
\xymatrix{0 \ar[r] & \ell_{\Phi} \ar[r]^j & Z(\varphi) \ar[r]^p & \ell_{\Phi}^* \ar[r] & 0}
\]
where $j(x) = (0, x)$ and $p(y, x) = y$.
\end{proposition}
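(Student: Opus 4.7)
The plan is to obtain the embedding $j$ directly from Lemma \ref{lem:Orlicz_subspace}, and then use the duality $Z(\varphi)^* \cong Z(-\varphi)$ recalled in Section \ref{Sec:Background} to derive the boundedness of $p$ and the exactness of the sequence.

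For the embedding $j$, I apply Lemma \ref{lem:Orlicz_subspace} to the trivial normalized block sequence $v_n = e_n$. Since $\varphi(0) = 0$, the formula for $\Omega_\varphi$ gives $\Omega_\varphi(e_n) = 0$, so the vectors $w_n = (\Omega_\varphi(e_n), e_n)$ coincide with $j(e_n) = (0, e_n)$. The lemma then yields that $(j(e_n))_n$ is a basic sequence in $Z(\varphi)$ equivalent to the canonical basis of $\ell_\Phi$. Because $\Phi(t) \sim t^2 \log^2 t$ satisfies the $\Delta_2$-condition near $0$ (one easily checks $\Phi(2t)/\Phi(t) \to 4$ as $t\to 0^+$), the canonical basis is a Schauder basis of $\ell_\Phi$, so $j$ extends uniquely from $c_{00}$ to an isomorphic embedding $\ell_\Phi \hookrightarrow Z(\varphi)$ with closed image $\overline{\mathrm{span}}\{(0, e_n) : n \geq 1\}$.

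Applying the same argument to $-\varphi \in \mathcal{L}_{bi}$ yields an isomorphic embedding $j_{-\varphi} : \ell_\Phi \to Z(-\varphi) \cong Z(\varphi)^*$, $w \mapsto (0, w)$. I define $p$ via duality as the restriction to $Z(\varphi)$ of the adjoint $j_{-\varphi}^* : Z(\varphi)^{**} \to \ell_\Phi^*$. Using the explicit pairing $\langle (y, x), (c, d) \rangle = \langle x, c \rangle + \langle y, d \rangle$ from Section \ref{Sec:Background}, a direct computation gives $\langle p(y, x), w \rangle = \langle (y,x),(0,w)\rangle = \langle y, w \rangle$ for every $w \in \ell_\Phi$ and $(y,x)\in Z(\varphi)$, so $p$ is indeed the first-coordinate projection $(y, x) \mapsto y$, now valued in $\ell_\Phi^*$. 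Since $j_{-\varphi}$ is bounded below, the standard duality between surjectivity and lower boundedness of adjoints gives that $p$ is surjective.

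For exactness, $p \circ j = 0$ is immediate, so $j(\ell_\Phi) \subseteq \ker p$. The converse inclusion reduces via the closed range theorem to showing $j(\ell_\Phi)^\perp \subseteq \mathrm{Im}(p^*) = \mathrm{Im}(j_{-\varphi})$ in $Z(\varphi)^* = Z(-\varphi)$. A direct pairing calculation shows that $(c, d) \in j(\ell_\Phi)^\perp$ iff $\langle x, c \rangle = 0$ for every $x \in \ell_\Phi$; since $c_{00} \subseteq \ell_\Phi$ is dense in $\ell_2$, this forces $c = 0$, i.e., $j(\ell_\Phi)^\perp$ consists of elements of $Z(-\varphi)$ of the form $(0, d)$. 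The main obstacle, and the place where the bi-Lipschitz hypothesis is essential, is showing that any such $(0, d) \in Z(-\varphi)$ actually satisfies $d \in \ell_\Phi$: the bi-Lipschitz property of $-\varphi$ provides the two-sided comparison $\|(0, d)\|_{-\varphi} = \|\Omega_{-\varphi}(d)\|_2 + \|d\|_2 \asymp \|d\|_{\ell_\Phi}$ (which is precisely the content of Lemma \ref{lem:Orlicz_subspace} for $-\varphi$ on general $d$, not just on $c_{00}$), so finiteness of the left-hand side forces $d \in \ell_\Phi$ and hence $(0, d) = j_{-\varphi}(d) \in \mathrm{Im}(p^*)$, completing the proof of exactness.
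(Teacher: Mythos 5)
The paper gives no proof of this proposition: it is stated by analogy with the known representation of $Z_2$, with the reader sent to the cited references. Your argument therefore stands on its own, and it is essentially correct and follows the natural route: Lemma~\ref{lem:Orlicz_subspace} applied to $v_n=e_n$ (where indeed $\Omega_\varphi(e_n)=0$) gives the isomorphic embedding $j$, and the self-duality $Z(\varphi)^*\cong Z(-\varphi)$ converts the corresponding embedding for $-\varphi$ into the quotient map $p$. Two points should be made explicit. First, you define $p$ as the restriction of $j_{-\varphi}^*\colon Z(\varphi)^{**}\to\ell_\Phi^*$ to $Z(\varphi)$ and then invoke the surjectivity of adjoints of bounded-below maps; the restriction of a surjective adjoint to a predual need not be surjective in general, so you must use that $Z(\varphi)$ is reflexive (available via the three-space property, as recorded in the proof of Lemma~\ref{lem:Orlicz_subspace}), and similarly the reflexivity of $\ell_\Phi$ if you want $\mathrm{Im}(p^*)=\mathrm{Im}(j_{-\varphi})$ rather than merely the inclusion $\supseteq$ (which is all the exactness argument needs). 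Second, in the exactness step you speak of ``elements of $Z(-\varphi)$ of the form $(0,d)$''; since $Z(-\varphi)$ is defined as a completion of $\ell_2\times c_{00}$, this requires the standard identification of the completion with the set of pairs $(y,x)$ with $x\in\ell_2$ and $y-\Omega_{-\varphi}(x)\in\ell_2$, after which $\|(0,d)\|_{-\varphi}=\|\Omega_{-\varphi}(d)\|_2+\|d\|_2$ and the bi-Lipschitz lower bound $|\varphi(t)|\gtrsim t$ does force $d\in\ell_\Phi$, exactly as you claim; this is indeed the one place where $\varphi\in\mathcal{L}_{bi}$ (rather than merely Lipschitz) is used. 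With these two standard facts supplied, the proof is complete.
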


\section{Distinguishing twisted Hilbert spaces}\label{sec:distinguish}

The motivation for this section is to establish a simple criterion to guarantee that $Z(\varphi)$ is non-isomorphic to $Z(\psi)$. We will follow the proof that $Z(\varphi_{\alpha})$ is not isomorphic to a subspace of $Z(\varphi_{\beta})$ presented in \cite[Chapter~16]{Benyamini1998geometric}, which is a simplification of the proof of \cite{Kalton_Elementary}. We present the proof in full detail for the sake of completeness.

Let $\varphi \in \Ll_{bi}$. Let us consider the following classes of functions. We write
\begin{enumerate}
    \item $\varphi \in \Ll_{bis}$ if the second derivative of $\varphi$ exists and goes to $0$ as $t$ goes to $\infty$.

    \item $\varphi \in \Ll_{bid}$ if
        \[
            \limsup_{n, m \rightarrow \infty}
            \left|\frac{\varphi(\log n) - \varphi(\log n^{1/2})}{\log n^{1/2}} - \frac{\varphi(\log m) - \varphi(\log m^{1/2})}{\log m^{1/2}}\right| > 0.
        \]
\end{enumerate}
We then set $\Ll_{bids} := \Ll_{bis} \cap \Ll_{bid}$.
\vspace{0.2cm} 

Our main goal here is to prove the following result. Recall that two Banach spaces $X$ and $Y$ are said to {\it incomparable} if no infinite dimensional subspace of $X$ is isomorphic to a subspace of $Y$.

\begin{theorem}\label{thm:distinguishing}
Let $\varphi \in \Ll_{bid}$ and $\psi \in \Ll_{bis}$. If $\Omega_\varphi$ and $\Omega_\psi$ are not projectively equivalent, then we have that $Z(\varphi)$ is incomparable to $Z(\psi)$.
\end{theorem}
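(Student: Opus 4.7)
My plan is to argue by contradiction, along the lines of the scheme in \cite[Chapter~16]{Benyamini1998geometric}. Suppose there is an infinite-dimensional subspace $W \subseteq Z(\varphi)$ and an isomorphic embedding $T : W \to Z(\psi)$; I want to deduce that $\Omega_\varphi$ and $\Omega_\psi$ must be projectively equivalent. The first step is a reduction to block form. Using a standard gliding-hump argument with the UFDDs $(E_n)$ of both $Z(\varphi)$ and $Z(\psi)$, together with Theorem~\ref{thm:good_blocks} applied on both sides, I would extract a normalized sequence $(w_n) \subset W$ such that, after a summable perturbation, $w_n \approx (\Omega_\varphi(u_n), u_n)$ in $Z(\varphi)$ and $Tw_n \approx (\Omega_\psi(z_n), z_n)$ in $Z(\psi)$, where $(u_n), (z_n)$ are normalized block bases in $\ell_2$. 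Here Theorem~\ref{thm:good_blocks} is used to arrange $\inf_n \|q(w_n)\| > 0$ and $\inf_n \|q(Tw_n)\| > 0$, so that we are in the $\ell_\Phi$-like regime rather than the pure $\ell_2$ regime (which would be a degenerate case to be absorbed into the projective equivalence).

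The second step is to test $T$ on \emph{flat} linear combinations. For each large $N$, pick signs $\sigma_k = \pm 1$ and set
\[
    x_N := \frac{1}{\sqrt{N}} \sum_{k=1}^{N} \sigma_k w_k,
\]
chosen so that $q(x_N) = \frac{1}{\sqrt{N}} \sum_{k=1}^N \sigma_k u_k$ has approximately constant modulus on its support of total size $M_N$. The key computation, standard in Kalton--Peck theory, is that on a truly flat vector $u$ of the form $\frac{1}{\sqrt{m}} \sum_{k \in A} \pm e_k$ with $|A|=m$, one has $\Omega_\varphi(u) = \varphi\!\left(\tfrac{1}{2}\log m\right) u$. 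Combined with the quasi-linearity of $\Omega_\varphi$ and the bi-Lipschitz bounds, this yields
\[
    \Omega_\varphi(q(x_N)) = \varphi\!\left(\tfrac{1}{2}\log M_N\right) q(x_N) + R_N,
\]
with $R_N$ of lower order; the analogous identity holds on the $Z(\psi)$ side for $\Omega_\psi(q(Tx_N))$.

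Comparing the two expressions and invoking the boundedness of $T$ and $T^{-1}$, one extracts a scalar $c \neq 0$ such that
\[
    \varphi\!\left(\tfrac{1}{2}\log M_N\right) - c\,\psi\!\left(\tfrac{1}{2}\log M_N\right)
\]
stays bounded as $N \to \infty$. The proof then concludes by exploiting the contrast between $\Ll_{bis}$ and $\Ll_{bid}$: since $\psi'' \to 0$, a mean-value argument shows that $\frac{\psi(\log n) - \psi(\log n^{1/2})}{\log n^{1/2}}$ converges as $n \to \infty$, so the boundedness above forces the same difference quotient for $\varphi$ to converge, directly contradicting $\varphi \in \Ll_{bid}$. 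The main obstacle will be Step~2: making the flat-vector calculation rigorous despite the summable perturbations and despite $(u_n), (z_n)$ being only \emph{approximately} flat blocks, so that the residual error $R_N$ is genuinely negligible compared with the leading term $\varphi(\tfrac{1}{2}\log M_N)\,q(x_N)$ at the scales needed to compare $\varphi$ and $\psi$.
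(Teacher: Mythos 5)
Your Step 2 contains the essential gap. First, choosing signs $\sigma_k=\pm1$ cannot make $q(x_N)=\frac{1}{\sqrt N}\sum_k\sigma_k u_k$ flat: flatness depends on the moduli $|u_k(j)|$, which the signs do not affect, so unless the $u_k$ are already flat of equal height the identity $\Omega_\varphi(q(x_N))=\varphi(\tfrac12\log M_N)q(x_N)+R_N$ fails with $R_N$ of the \emph{same} order as the main term (the coordinates of $\Omega_\varphi$ involve $\varphi(\log\frac{\sqrt N}{|u_k(j)|})$, which depends on the unknown internal distribution of each block). This is fatal on the image side: the blocks $z_n$ supporting $Tw_n$ are entirely outside your control, so there is no flat-vector identity for $\Omega_\psi$ there. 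This is precisely the difficulty the hypotheses are designed for, and the paper's proof handles it asymmetrically: it tests the embedding on the genuinely flat canonical vectors $\sum_{n\in A}(\varphi(\sigma)e_n,e_n)$ of $Z(\varphi)$ at \emph{two} scales $\sigma_1,\sigma_2$, Taylor-expands $\psi$ around the unknown base points $-\log|w_n(k)|$ (this is exactly where $\psi\in\Ll_{bis}$, i.e.\ $\psi''\to0$, enters), and observes that the unknown linear term $\psi'(-\log|w_n(k)|)\sigma$ cancels when one forms the difference of the two difference quotients; the condition $\varphi\in\Ll_{bid}$ then yields the contradiction. Your proposal has no mechanism to eliminate the unknown internal structure of the image blocks.

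Second, your concluding step is logically wrong: $\psi''\to0$ does \emph{not} imply that $\frac{\psi(\log n)-\psi(\log n^{1/2})}{\log n^{1/2}}$ converges. Indeed the paper's whole point is that $\Ll_{bis}$ and $\Ll_{bid}$ are not in "contrast" --- it defines $\Ll_{bids}=\Ll_{bis}\cap\Ll_{bid}$ and populates it with an infinite-dimensional cone of functions of the form $x+\alpha x\sin(\beta\log x)$, whose second derivatives tend to $0$ while their difference quotients oscillate. Moreover, had you actually obtained $\sup_N|\varphi(\tfrac12\log M_N)-c\,\psi(\tfrac12\log M_N)|<\infty$ with $\{\tfrac12\log M_N\}$ sufficiently dense, bi-Lipschitzness would give $\sup_{t>0}|\varphi(t)-c\psi(t)|<\infty$, which is already projective equivalence and ends the proof --- the appeal to difference quotients at that point signals a confusion about the roles of the two classes. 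Finally, note that the paper's argument also requires a separate $2\times2$-matrix lemma (boundedness of $\bigl(\begin{smallmatrix}\lambda&\mu\\ \eta&\sigma\end{smallmatrix}\bigr)$ with $\sigma\neq0$ forces projective equivalence) together with the two claims $\|y_n\|_2\to0$ and $\|w_n\|_\infty\not\to0$ to extract such a matrix from the embedding; none of this structure is visible in your sketch.
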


Recall that $\Omega_{\varphi}$ and $\Omega_{\psi}$ are projectively equivalent if and only if
\[
\sup_{t > 0} \; \abs{\varphi(t) - a \psi(t)} < \infty
\]
for some $a \neq 0$ \cite{KP}. 
Therefore, we have a simple test for projective equivalence.
Each element of $Z(\varphi)$ is given by a pair $(x, y)$ of sequences. Therefore, it is also natural to consider $2\times 2$ matrices as acting on $Z(\varphi)$ (see also \cite{OperatorsZ2}). More precisely, we write 
\[
\begin{pmatrix}
\lambda & \mu \\
\eta & \sigma
\end{pmatrix} (x, y) = (\lambda x + \mu y, \eta x + \sigma y).
\]

For every $n \geq 1$, we set $f_n := \sum\limits_{j=1}^n e_j \in \ell_2$.

\begin{lemma}
Let $T = \begin{pmatrix}
\lambda & \mu \\
\eta & \sigma
\end{pmatrix} : Z(\varphi) \rightarrow Z(\psi)$ be bounded. If $\varphi, \psi \in \mathcal{L}_{bi}$ are not projectively equivalent, then $\lambda = \eta = \sigma = 0$.
\end{lemma}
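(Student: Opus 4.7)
The plan is to probe $T$ on the diagonal vectors $(\Omega_\varphi(f_n), f_n) \in Z(\varphi)$, which have $\varphi$-norm exactly $\sqrt{n}$, exploiting the fact that $f_n$ has constant modulus on its support. First I would evaluate directly from the definition to obtain $\Omega_\varphi(f_n) = \varphi(\log\sqrt n)\,f_n$ and, more generally, $\Omega_\psi(b\, f_n) = b\,\psi(\log\sqrt n)\,f_n$ for every scalar $b$. This reduces the action of $T$ on the test vector to a purely scalar computation:
\[
T(\Omega_\varphi(f_n), f_n) = (a_n f_n,\ b_n f_n), \qquad a_n := \lambda\varphi(\log\sqrt n) + \mu,\ \ b_n := \eta\varphi(\log\sqrt n) + \sigma.
\]

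Next, plugging this into the definition of the $\psi$-norm yields
\[
\|T(\Omega_\varphi(f_n), f_n)\|_\psi = \bigl|a_n - b_n\psi(\log\sqrt n)\bigr|\sqrt n + |b_n|\sqrt n,
\]
so the boundedness of $T$ produces the two uniform estimates $|b_n| \leq \|T\|$ and $|a_n - b_n\psi(\log\sqrt n)| \leq \|T\|$ for every $n \geq 1$. Since $\varphi \in \mathcal{L}$ satisfies $|\varphi(t)| \to \infty$, the first estimate immediately forces $\eta = 0$, whence $b_n = \sigma$. Substituting back into the second estimate gives
\[
\bigl|\lambda \varphi(\log\sqrt n) - \sigma\psi(\log\sqrt n) + \mu\bigr| \leq \|T\|
\]
for every $n$. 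I would then upgrade this discrete bound to one on all of $[0,\infty)$: since the consecutive gaps $\log\sqrt{n+1} - \log\sqrt n$ are bounded above by $\tfrac{1}{2}\log 2$ and the map $\lambda\varphi - \sigma\psi$ is Lipschitz, a one-line interpolation across consecutive lattice points yields $\sup_{t \geq 0}|\lambda\varphi(t) - \sigma\psi(t)| < \infty$, with the additive constant $\mu$ being absorbed.

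The argument then closes by a short case analysis. If both $\lambda$ and $\sigma$ are nonzero, then $\varphi$ is equivalent to $(\sigma/\lambda)\psi$, contradicting the hypothesis that $\Omega_\varphi$ and $\Omega_\psi$ are not projectively equivalent. If exactly one of $\lambda, \sigma$ is nonzero, then the corresponding map ($\varphi$ or $\psi$) would be bounded on $[0,\infty)$, contradicting its membership in $\mathcal{L}$. Therefore $\lambda = \sigma = 0$, and combined with $\eta = 0$ this finishes the proof. The only step requiring any care is the Lipschitz extension from the lattice $\{\log\sqrt n\}$ to all of $[0,\infty)$, but this is routine given the bounded-gap property and so does not constitute a serious obstacle.
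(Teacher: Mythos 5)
Your proof is correct and follows essentially the same route as the paper: test $T$ on the vectors $(\Omega_\varphi f_n, f_n)$, reduce to scalar identities via $\Omega_\varphi(f_n)=\varphi(\log\sqrt n)f_n$, and pass from the lattice $\{\log\sqrt n\}$ to all of $[0,\infty)$ by the Lipschitz property. The only (harmless) difference is that the paper also uses the separate family $n^{-1/2}(f_n,0)$ to kill $\eta$, whereas you extract that from the second coordinate of the same diagonal test vectors; your explicit case analysis for ``exactly one of $\lambda,\sigma$ nonzero'' is in fact slightly more careful than the paper's.
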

\begin{proof}
Let $u_n = n^{-\frac{1}{2}} (f_n, 0)$. Then, $Tu_n = n^{-\frac{1}{2}} (\lambda f_n, \eta f_n)$, and therefore
\begin{eqnarray*}
\|T\| & \geq & \|T u_n\|_{\psi} \\
& = & n^{-\frac{1}{2}} (\|\lambda f_n - \eta \Omega_{\psi}f_n\|_2 + \|\eta f_n\|_2) \\
& \geq & n^{-\frac{1}{2}} (\abs{\eta} \|\Omega_{\psi} f_n\|_2 + (\abs{\eta} - \abs{\lambda}) \|f_n\|_2) \rightarrow \infty
\end{eqnarray*}
if $\eta \neq 0$, since $Z(\psi)$ is nontrivial.

Now take $u_n = n^{-\frac{1}{2}} (\Omega_{\varphi}f_n, f_n)$. Then, $Tu_n = n^{-\frac{1}{2}}(\lambda \Omega_{\varphi}e_n + \mu f_n, \sigma f_n)$ (since $\eta = 0$) and
\begin{eqnarray*}
\|T\| & \geq & \|Tu_n\|_{\psi} \\
      & \geq & n^{-\frac{1}{2}} \|\lambda \Omega_{\varphi} f_n + \mu f_n - \sigma \Omega_{\psi}f_n\|_2 \\
      & \geq & n^{-\frac{1}{2}}(\|\lambda \Omega_{\varphi} f_n - \sigma \Omega_{\psi}f_n\|_2 - \|\mu f_n\|_2) \rightarrow \infty
\end{eqnarray*}
if $\Omega_{\varphi}$ and $\Omega_{\psi}$ are not projectively equivalent and $\lambda \neq 0$ or $\sigma \neq 0$. Indeed, we have that
\begin{eqnarray*}
\|\lambda \Omega_{\varphi}(f_n) - \sigma \Omega_{\psi}(f_n)\| & = & \sqrt{n} \left|\lambda \varphi(\log \sqrt{n}) - \mu \psi (\log \sqrt{n})\right|.
\end{eqnarray*}
Let $\varepsilon>0$ be given. If we go far enough down in the real line, then we may approximate any real number by one of the form $\log \sqrt{n}$ up to $\varepsilon$,
Since $\varphi$ and $\psi$ are bi-Lispchitz, we obtain
\begin{equation*} 
\sup_{x > 0} \left|\lambda \varphi(\log x) - \mu \psi (\log x)\right| < \infty \iff \sup_n \left|\lambda \varphi(\log \sqrt{n}) - \mu \psi (\log \sqrt{n})\right| < \infty
\end{equation*}
as we wanted to prove.
\end{proof}


\begin{proposition}Let $\varphi \in \mathcal{L}_{bid}$ and $\psi \in \mathcal{L}_{bis}$. If $Z(\varphi)$ is isomorphic to a subspace of $Z(\psi)$, then there is a bounded $T = \begin{pmatrix}
\lambda & \mu \\
\eta & \sigma
\end{pmatrix} : Z(\varphi) \rightarrow Z(\psi)$ with $\sigma \neq 0$.
\end{proposition}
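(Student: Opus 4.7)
The plan is to extract the desired matrix operator $T$ from the isomorphic embedding $J \colon Z(\varphi) \to Z(\psi)$ by a standard block-perturbation and compactness procedure, and then to use $\Phi$-stability to force $\sigma \neq 0$.

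First, I would apply a gliding hump argument with respect to the $2$-dimensional UFDDs $(E_n^\varphi)$ of $Z(\varphi)$ and $(F_n^\psi)$ of $Z(\psi)$: for arbitrary $\varepsilon > 0$, one can find strictly increasing index sequences $(n_j)$ and $(k_j)$ and a compact perturbation $K$ of $J$ with $\|K\| < \varepsilon$, such that $(J - K)$ carries each block $E_{n_j}^\varphi$ into the single block $F_{k_j}^\psi$. The action on each such block is then described by a $2 \times 2$ scalar matrix $A_j = \begin{pmatrix} \lambda_j & \mu_j \\ \eta_j & \sigma_j \end{pmatrix}$, and these matrices are uniformly bounded by (approximately) $\|J\|$. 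By compactness of bounded balls in the space of $2 \times 2$ scalar matrices over $\K$, after passing to a subsequence $A_j$ converges to a limit $A = \begin{pmatrix} \lambda & \mu \\ \eta & \sigma \end{pmatrix}$, and the candidate operator is $T(x,y) := (\lambda x + \mu y, \eta x + \sigma y)$.

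To show $\sigma \neq 0$, I would invoke Theorem \ref{thm:good_blocks}. The sequence $\xi_j := (0, e_{n_j})$ in $Z(\varphi)$ satisfies $\inf \|e_{n_j}\|_2 = 1 > 0$, so by the ``in particular'' clause of Theorem \ref{thm:good_blocks} it is equivalent to the canonical basis of $\ell_{\Phi}$. Since $J$ is an isomorphic embedding, $(J \xi_j)$ is also equivalent to the $\ell_{\Phi}$-basis, and the compact perturbation $K$ does not change this equivalence class, so $((J-K)\xi_j) = (\mu_j e_{k_j},\, \sigma_j e_{k_j})$ is likewise $\ell_{\Phi}$-equivalent. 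If one had $\sigma_j \to 0$ along a subsequence, these vectors would asymptotically lie in the Hilbertian subspace $\ell_2 \times \{0\} \subset Z(\psi)$, and a standard small-perturbation argument (together with Theorem \ref{thm:good_blocks} applied inside $Z(\psi)$) would produce an $\ell_2$-equivalent subsequence, contradicting the $\ell_{\Phi}$-equivalence. Hence $\inf_j |\sigma_j| > 0$ and the limit satisfies $\sigma \neq 0$.

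The boundedness of $T$ would then follow because, on the subspace $W := \overline{\operatorname{span}}\{E_{n_j}^\varphi : j \ge 1\}$, the operator $J - K$ is a block operator whose matrices $A_j$ are uniformly close to $A$; after discarding finitely many initial blocks and absorbing the remaining difference into a compact term, one obtains a bounded block operator on a cofinite-codimensional subspace of $W$ with constant matrix $A$ on each block. Using the UFDD structure and the symmetry of the construction (each $E_n^\varphi$ is canonically identified with $E_1^\varphi$), this transfers to a bounded operator of matrix form on all of $Z(\varphi)$. The main obstacles I expect are (i) executing the gliding hump so that each $E_{n_j}^\varphi$ goes cleanly into a single $F_{k_j}^\psi$ with controlled error in both the ``$\ell_2$-direction'' and the ``$\ell_{\Phi}$-direction'' simultaneously, and (ii) rigorously verifying that the pointwise matrix formula $T(x,y) = (\lambda x + \mu y, \eta x + \sigma y)$ defines a bounded operator globally on $Z(\varphi)$, and not merely on the block subspace from which $A$ was extracted.
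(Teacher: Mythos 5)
There is a genuine gap, and it sits exactly where you flag ``obstacle (i)'': the claim that a gliding-hump argument lets you perturb the embedding $J$ by a compact $K$ of small norm so that each block $E_{n_j}^\varphi$ lands in a \emph{single} $2$-dimensional block $F_{k_j}^\psi$. A gliding hump only gives you (after a subsequence and a small perturbation) that the images $S(e_n,0)=(u_n,y_n)$ and $S(0,e_n)=(v_n,w_n)$ are supported on disjoint \emph{intervals} $I_n$ of blocks; in general $w_n$ is spread over many coordinates (e.g.\ $w_n\approx m^{-1/2}\sum_{k\in I_n}e_k$ with $|I_n|=m\to\infty$), and no small-norm perturbation compresses such a vector to one coordinate. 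The paper's proof has to do two things that your outline replaces with this false reduction: first, prove that $\|w_n\|_\infty\not\to 0$, i.e.\ that there is a single coordinate $k_n$ with $|w_n(k_n)|\geq\delta$ (Claim~2); second, show that the \emph{diagonal compression} $S'$ that keeps only the $k_n$-th block of the image of $E_n$ is bounded — this uses the unconditionality of the $2$-dimensional UFDD via the argument of \cite[Proposition 1.c.8]{ClassicalI}, and $S'$ is \emph{not} norm-close to $S$. Only then does one get matrices $S_{n,k_n}$ with $(2,2)$-entry $w_n(k_n)$ bounded away from $0$, whence $\sigma\neq 0$ in the limit.

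Your substitute for Claim~2 via $\Phi$-stability does not close this gap: Theorem~\ref{thm:good_blocks} tells you that $(v_n,w_n)$ is $\ell_\Phi$-equivalent and hence (from the dichotomy) that $\inf_n\|w_n\|_2>0$, but what is needed is the much stronger statement $\limsup_n\|w_n\|_\infty>0$, and a sequence can have $\|w_n\|_2\geq\delta$ while $\|w_n\|_\infty\to 0$. A telling symptom is that your argument never uses the hypotheses $\varphi\in\Ll_{bid}$ and $\psi\in\Ll_{bis}$: these are precisely what drive Claim~2 (a second-order Taylor expansion of $\psi$, controlled by the $\Ll_{bis}$ condition, combined with an averaging over index sets and the divergence condition $\Ll_{bid}$ on $\varphi$ to derive a contradiction from $\|w_n\|_\infty\to 0$). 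Without that analytic input the extraction of a matrix operator with $\sigma\neq 0$ does not go through. (The paper also needs a preliminary Claim~1, again using bi-Lipschitzness, to arrange $y_n=0$, which feeds the lower bound $\|u_n\|_2\geq c$ into Claim~2; your outline omits this as well.)
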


\begin{proof}
Let $S : Z(\varphi) \rightarrow Z(\psi)$ with $\|S\| \leq 1$ and $\|S(e_n, 0)\|_{\psi} \geq c>0$ for every $n$. By taking a subsequence and perturbing $S$, we may suppose that the images of $(e_n, 0)$ and $(0, e_n)$ are blocks of $\mathcal{E}$, that is, that there are indexes $p_1 < p_2 < p_3 < \cdots$ such that if $I_n = \{k : p_n \leq k < p_{n+1}\}$, then $S(e_n, 0) = (u_n, y_n)$ and $S(0, e_n) = (v_n, w_n)$, with $u_n, y_n, v_n, w_n \in [e_k]_{k \in I_n}$.

\vspace{0.2cm}
\noindent
{\bf Claim 1}:
$\|y_n\|_2 \rightarrow 0$.
\vspace{0.2cm}

Intuitively, this means that the image by $S$ of the canonical copy of $\ell_2$ in $Z(\varphi)$ is essentially contained in the canonical copy of $\ell_2$ in $Z(\psi)$.

\begin{proof}[Proof of Claim 1:]
Assume, by means of contradiction, that $\|y_n\|_2 \not\rightarrow 0$. Furthermore, assume without loss of generality that there is $\delta>0$ such that $\|y_n\|_2 > \delta$ for every $n \in \N$. Since $\|(u_n, y_n)\|_{\psi} = \|S(e_n, 0)\|_{\psi} \leq 1$, it yields
\begin{equation}\label{eq:1}
\sum\limits_{k \in I_n} \left|u_n(k) - y_n(k)\psi\left(\log \frac{\|y_n\|_2}{\abs{y_n(k)}}\right)\right|^2 \leq 1.
\end{equation}
Also, since $\left\|\left(\sum\limits_{n=1}^N u_n, \sum \limits_{n=1}^N y_n\right)\right\|_{\psi} = \|S(f_N, 0)\|_{\psi} \leq N^{-\frac{1}{2}}$ for each $N$, we have that
\begin{equation}\label{eq:2}
\sum\limits_{n=1}^N \sum\limits_{k \in I_n} \left|u_n(k) - y_n(k)\psi\left(\log \frac{\left\|\sum \limits_{n=1}^N y_n\right\|_2}{\abs{y_n(k)}}\right)\right|^2 \leq N.
\end{equation}
Using \eqref{eq:1}, \eqref{eq:2} and the orthogonality of the vectors under consideration, we obtain
\[
\sum\limits_{n=1}^N \sum_{k \in I_n} \abs{y_n(k)}^2 \left|\psi\left(-\log \frac{\|y_n\|_2}{\left\|\sum\limits_{j=1}^N y_n\right\|_2}\right)\right|^2 \leq C_1 N.
\]
Now since $\psi$ is bi-lispchitz and $\delta<\|y_n\|_2 \leq 1$, we have that
\[
C_2 N \delta^2 \log^2 (\delta N^{\frac{1}{2}}) \leq C_1 N
\]
which yields a contradiction.
\end{proof}

Once again, by passing to a subsequence and perturbing $S$ we may suppose that $y_n = 0$ for every $n$.

\vspace{0.2cm}
\noindent
{\bf Claim 2}:
$\|w_n\|_{\infty} \not\rightarrow 0$. 
\vspace{0.2cm}

Intuitively, this shows that the second coordinate is really used by $S$ when applied to the vectors $(0, e_n)$.

\begin{proof}[Proof of Claim 2:]
Otherwise, let $K > 0$ and $n_0\in \mathbb N$ be such that $\|w_n\|_{\infty} \leq e^{-K}$ for each $n > n_0$. Let $m_1, m_2 \in \N$, $M = 4 \max\{m_1^2, m_2^2\}$, $\sigma_1 = \frac{1}{2} \log m_1$ and $\sigma_2 = \frac{1}{2} \log m_2$. Also let $J_{M} = \{k : n_0 + 1 \leq k \leq n_0 + M\}$ and take $A \subset J_{M}$ of cardinality $m_1$. 
Note that we have $\left\|\left(\sum\limits_{n \in A} \varphi(\sigma_1) e_n, \sum\limits_{n \in A} e_n\right)\right\|_{\varphi} = m_1^{1/2}$, so applying $S$ we obtain
\begin{equation}\label{eq:3}
\sum\limits_{n \in A} \sum\limits_{k \in I_n} \left|\varphi(\sigma_1) u_n(k) + v_n(k) - w_n(k) \psi\left(\log \frac{\|\sum_{n \in A} w_n\|_2}{\abs{w_n(k)}}\right)\right|^2 \leq m_1.
\end{equation}
Since $\psi$ is Lipschitz, we also have that
\begin{eqnarray*}
\sum\limits_{n \in A} \sum\limits_{k \in I_n} \left|w_n(k) \psi\left(\log \frac{\left\|\sum_{n \in A} w_n\right\|_2}{\abs{w_n(k)}}\right) - w_n(k) \psi\left(\log \frac{m_1^{1/2}}{\abs{w_n(k)}}\right)\right|^2
\end{eqnarray*}
is bounded above by
\begin{eqnarray*}
L(\psi)^2 \left\|\sum_{n \in A} w_n\right\|_2^2 \log^2 \frac{\left\|\sum_{n \in A} w_n\right\|_2}{m_1^{1/2}} = L(\psi)^2 \frac{m_1}{m_1} \left\|\sum_{n \in A} w_n\right\|_2^2 \log^2 \frac{\left\|\sum_{n \in A} w_n\right\|_2}{m_1^{1/2}}
\end{eqnarray*}
so that
\begin{equation}\label{eq:4}
\sum\limits_{n \in A} \sum\limits_{k \in I_n} \left|w_n(k) \psi\left(\log \frac{\|\sum_{n \in A} w_n\|}{\abs{w_n(k)}}\right) - w_n(k) \psi\left(\log \frac{m_1^{1/2}}{\abs{w_n(k)}}\right)\right|^2 \leq L(\psi)^2 \frac{m_1}{e^2}.
\end{equation}
The last inequality comes from the fact that the function $t \mapsto t \log t$ is bounded by $\frac{1}{e}$ on $(0, 1]$, using that $\|w_n\|_2 \leq \|S(0, e_n)\|_{\psi} \leq 1$ and that the vectors $(w_n)$ are orthogonal.

Now, by \eqref{eq:3} and \eqref{eq:4} (looking at the corresponding vectors in $\ell_2$ and using the triangle inequality), we have
\[
\sum\limits_{n \in A} \sum\limits_{k \in I_n} \abs{\varphi(\sigma_1) u_n(k) + v_n(k) - w_n(k) \psi(\sigma_1 - \log \abs{w_n(k)})}^2 \leq C m_1
\]
for some $C>0$.
Taking the average over all possible $A$ we obtain
\begin{equation}\label{eq:5}
\sum\limits_{n = n_0+1}^{n_0+M} \sum\limits_{k \in I_n} \abs{\varphi(\sigma_1) u_n(k) + v_n(k) - w_n(k) \psi(\sigma_1 - \log \abs{w_n(k)})}^2 \leq C M.
\end{equation}
Now we repeat the calculations with $m_1^2$ instead of $m_1$ to arrive at
\begin{equation}\label{eq:6}
\sum\limits_{n = n_0+1}^{n_0+M} \sum\limits_{k \in I_n} \abs{\varphi(2\sigma_1) u_n(k) + v_n(k) - w_n(k) \psi(2\sigma_1 - \log \abs{w_n(k)})}^2 \leq C M.
\end{equation}

Combining \eqref{eq:5} and \eqref{eq:6}, and using the triangle inequality in $\ell_2$, we have
\begin{equation}\label{eq:7}
\sum\limits_{n = n_0+1}^{n_0+M} \sum\limits_{k \in I_n} \abs{(\varphi(2\sigma_1) - \varphi(\sigma_1)) u_n(k) - w_n(k) (\psi(2\sigma_1 - \log \abs{w_n(k)}) - \psi(\sigma_1 - \log \abs{w_n(k)}))}^2 \leq 2C M.
\end{equation}
Now consider the Taylor estimate
\[
\abs{\psi(t) - \psi(s) - \psi'(s)(t-s)} = \frac{1}{2}\abs{\psi''(c)} (t-s)^2
\]
for some $c$ between $t$ and $s$. Apply it to
\[
\psi(2\sigma_1 - \log \abs{w_n(k)}) - \psi(-\log \abs{w_n(k)}) - \psi'(-\log \abs{w_n(k)}) 2\sigma_1
\]
and to
\[
\psi(\sigma_1 - \log \abs{w_n(k)}) - \psi(-\log \abs{w_n(k)}) - \psi'(-\log \abs{w_n(k)}) \sigma_1
\]
and use that $\psi \in \Ll_{bis}$ to obtain that there is $h(K)$ that goes to $0$ as $K \rightarrow \infty$ such that
\begin{equation}\label{eq:8}
\abs{\psi(2\sigma_1 - \log \abs{w_n(k)}) - \psi(\sigma_1 - \log \abs{w_n(k)}) - \psi'(-\log \abs{w_n(k)} )\sigma_1} \leq h(K) \sigma_1^2.
\end{equation}
Now, since $\|w_n\|_2 \leq 1$ for every $n$, we also obtain that
\begin{equation}\label{eq:9}
\sum\limits_{n=n_0+1}^{n_0+M} \sum\limits_{k \in I_n} \abs{w_n(k)}^2 \abs{\psi(2\sigma_1 - \log \abs{w_n(k)}) - \psi(\sigma_1 - \log \abs{w_n(k)}) - \psi'(-\log \abs{w_n(k)} )\sigma_1}^2 \leq M h(K)^2 \sigma_1^4
\end{equation}
Combining \eqref{eq:7}, \eqref{eq:8} and \eqref{eq:9}, and using the triangle inequality in $\ell_2$, we have
\begin{equation}\label{eq:10}
\sum\limits_{n = n_0+1}^{n_0+M} \sum\limits_{k \in I_n} \left|\frac{\varphi(2\sigma_1) - \varphi(\sigma_1)}{\sigma_1} u_n(k) - w_n(k) \psi'(-\log \abs{w_n(k)})\right|^2 \leq M\left( \frac{2C}{\sigma_1^2} + h(K)^2 \sigma_1^2\right).
\end{equation}
Now we repeat the calculations with $m_2$ to obtain
\begin{equation}\label{eq:11}
\sum\limits_{n = n_0+1}^{n_0+M} \sum\limits_{k \in I_n} \left|\frac{\varphi(2\sigma_2) - \varphi(\sigma_2)}{\sigma_2} u_n(k) - w_n(k) \psi'(-\log \abs{w_n(k)})\right|^2 \leq M\left( \frac{2C}{\sigma_2^2} + h(K)^2 \sigma_2^2\right).
\end{equation}
From \eqref{eq:10} and \eqref{eq:11}, we obtain
\[
\sum\limits_{n = n_0+1}^{n_0+M} \sum\limits_{k \in I_n} \abs{u_n(k)}^2 \left|\frac{\varphi(2\sigma_1) - \varphi(\sigma_1)}{\sigma_1} - \frac{\varphi(2\sigma_2) - \varphi(\sigma_2)}{\sigma_2}\right|^2 \leq M\left(\frac{2C}{\sigma_1^2} + \frac{2C}{\sigma_2^2} + h(K)^2(\sigma_1^2 + \sigma_2^2)\right).
\]
Since 
$\|u_n\|_2 \geq c$ for every $n$, we obtain
\[
c^2 M \left|\frac{\varphi(2\sigma_1) - \varphi(\sigma_1)}{\sigma_1} - \frac{\varphi(2\sigma_2) - \varphi(\sigma_2)}{\sigma_2}\right|^2 \leq M\left(\frac{2C}{\sigma_1^2} + \frac{2C}{\sigma_2^2} + h(K)^2(\sigma_1^2 + \sigma_2^2)\right).
\]
So, as $K$ is arbitrary, we arrive at
\[
c^2 \left|\frac{\varphi(2\sigma_1) - \varphi(\sigma_1)}{\sigma_1} - \frac{\varphi(2\sigma_2) - \varphi(\sigma_2)}{\sigma_2}\right|^2 \leq \frac{2C}{\sigma_1^2} + \frac{2C}{\sigma_2^2}
\]
for every possible $\sigma_1$ and $\sigma_2$, which contradicts the fact that $\varphi \in \Ll_{bid}$.
\end{proof}

We may assume without loss of generality that there is $\delta>0$ such that $\|w_n\|_{\infty} > \delta > 0$ for all $n \in \N$ and so that, for each $n \in \N$, there is $k_n$ such that $\left|w_n(k_n)\right| > \delta$. Now recall the $2$-dimensional UFDD for $Z(\varphi)$ and $Z(\psi)$ defined by $E_n = [(e_n, 0), (0, e_n)]$. Let $P_n$ be the projection onto $E_n$. By considering $P_m \circ S|_{E_n}$, we can see that $S$ is given by a family $(S_{n, m})$ of $2\times 2$-matrices. By the same reasoning of \cite[Proposition 1.c.8]{ClassicalI}, we get that if $S' : Z(\varphi) \rightarrow Z(\psi)$ is defined by
\[
P_m S'|_{E_n} = \begin{cases}
S_{n, k_n} & \mbox{ if } m = k_n, \\
0 & \mbox{ otherwise},
\end{cases}
\]
then $S'$ is bounded. We have
\[
S_{n, k_n} = \begin{pmatrix}
u_n(k_n) & v_n(k_n) \\
0 & w_n(k_n)
\end{pmatrix}
\]
By the last fact of Section \ref{Sec:Background}, by passing to a subsequence, we may suppose that $S_{n, k_n}$ converges to some matrix
\[
S_{\infty} = \begin{pmatrix}
\lambda & \mu \\
0 & \sigma
\end{pmatrix}
\]
with $\left|\sigma\right| \geq \delta$. Finally, by passing to a further subsequence and perturbing the resulting operator, we obtain that $S_{\infty}$ defines a bounded operator from $Z(\varphi)$ into $Z(\psi)$ as we wanted.
\end{proof}
\section{A large family of singular twisted Hilbert spaces}\label{sec:coneable}

The results of the previous sections imply the existence of an infinite-dimensional cone of singular twisted Hilbert spaces by proving that the set of bi-Lipschitz maps from $[0, \infty)$ into $\K$ is coneable. 
Following \cite{Aizpiruetal}, we say that a set $F$ in a vector space is \emph{coneable} (or \emph{convex-coneable}, see, for instance, \cite{BCMRS}) if it contains a convex cone generated by an infinite linearly independent set.

\begin{proposition}\label{prop:g_in_bids}
Let $g : [0, \infty) \rightarrow \K$ be a differentiable function and $\beta > 0$.
If $|g|$ and $|g'|$ are bounded by $M>0$ and $0<\alpha<\frac{1}{M(1+\beta)}$, then $g_{\alpha, \beta} \in \Ll_{bi}$, where
    \[
    g_{\alpha, \beta}(x) := x + \alpha x g(\beta \log x).
    \]
If $g$ is twice differentiable and $\lim_{x \rightarrow \infty} \frac{g''(\beta \log x)}{x} = 0$, then $g_{\alpha, \beta} \in \Ll_{bis}$.
Moreover, if
    \begin{equation}\label{item3}
    \limsup_{n, m \rightarrow \infty} \left|\left(2g\left(\beta \log \log n\right) - g\left(\beta \log \log n^{1/2}\right)\right) - \left(2g\left(\beta \log \log m\right) - g\left(\beta \log \log m^{1/2}\right)\right)\right| > 0,
    \end{equation}
then $g_{\alpha, \beta} \in \Ll_{bids}$.
\end{proposition}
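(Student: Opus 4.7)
The plan is to verify the three claims in turn, each by direct calculation with the chain rule. First I would differentiate to obtain
\[
g_{\alpha,\beta}'(x) = 1 + \alpha\, g(\beta \log x) + \alpha \beta\, g'(\beta \log x),
\]
so that the assumptions $|g|,|g'| \leq M$ and $\alpha < 1/(M(1+\beta))$ yield
\[
1 - \alpha M(1+\beta) \;\leq\; g_{\alpha,\beta}'(x) \;\leq\; 1 + \alpha M(1+\beta),
\]
with both bounds strictly positive. The mean value theorem on $(0,\infty)$ then gives a two-sided Lipschitz estimate. Since $x\, g(\beta \log x) \to 0$ as $x \to 0^+$ by boundedness of $g$, I can extend $g_{\alpha,\beta}$ continuously to $[0,\infty)$ with $g_{\alpha,\beta}(0) := 0$, and the same two-sided bound on $g_{\alpha,\beta}'$ propagates to the bi-Lipschitz inequality down to $0$. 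Finally $g_{\alpha,\beta}(x) \geq (1-\alpha M)x \to \infty$, which gives $g_{\alpha,\beta} \in \Ll_{bi}$.

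For the $\Ll_{bis}$ claim I would differentiate once more to get
\[
g_{\alpha,\beta}''(x) \;=\; \frac{\alpha \beta}{x}\, g'(\beta \log x) \;+\; \frac{\alpha \beta^{2}}{x}\, g''(\beta \log x).
\]
The first summand tends to $0$ as $x \to \infty$ since $g'$ is bounded, and the second tends to $0$ by the hypothesis on $g''$. Thus $g_{\alpha,\beta}'' \to 0$ at infinity, and combining with the previous paragraph we get $g_{\alpha,\beta} \in \Ll_{bis}$.

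The core of the proof is the $\Ll_{bid}$ computation. Using $\log n^{1/2} = \tfrac12 \log n$ and $\log\log n^{1/2} = \log\log n - \log 2$, a direct substitution into the definition of $g_{\alpha,\beta}$ gives
\[
\frac{g_{\alpha,\beta}(\log n) - g_{\alpha,\beta}(\log n^{1/2})}{\log n^{1/2}}
\;=\; 1 + \alpha\bigl(2g(\beta \log\log n) - g(\beta \log\log n^{1/2})\bigr).
\]
Subtracting the analogous expression in $m$ cancels the constant $1$, leaving exactly $\alpha$ times the quantity whose limsup appears in \eqref{item3}. Taking the limsup then gives a strictly positive value, so $g_{\alpha,\beta} \in \Ll_{bid}$; together with the previous step this yields $g_{\alpha,\beta} \in \Ll_{bids}$.

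There is no serious obstacle; the proposition is essentially a bookkeeping check that the perturbation $x \mapsto x + \alpha x\, g(\beta \log x)$ of the identity lands in the right class. The only mildly delicate point is the algebraic identification above of $\bigl(g_{\alpha,\beta}(\log n) - g_{\alpha,\beta}(\log n^{1/2})\bigr)/\log n^{1/2}$ with the expression in \eqref{item3}, which is what justifies the precise form of the sufficient condition. Everything else reduces to the boundedness hypotheses on $g$, $g'$, $g''$ and the smallness assumption on $\alpha$.
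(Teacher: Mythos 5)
Your proof is correct and follows essentially the same route as the paper: bound the derivative of the perturbation $x\,g(\beta\log x)$ by $M(1+\beta)$ to get the bi-Lipschitz estimate, compute the second derivative for $\Ll_{bis}$, and reduce \eqref{item3} to the definition of $\Ll_{bid}$. In fact you supply the one detail the paper leaves implicit, namely the identity $\bigl(g_{\alpha,\beta}(\log n)-g_{\alpha,\beta}(\log n^{1/2})\bigr)/\log n^{1/2}=1+\alpha\bigl(2g(\beta\log\log n)-g(\beta\log\log n^{1/2})\bigr)$, which is exactly what makes \eqref{item3} the right sufficient condition.
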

\begin{proof}
It is clear that $g_{\alpha, \beta}\in \mathcal L$ for any $\alpha>0$. We also have that
\[
\abs{x - y} \leq \abs{x + \alpha x g(\beta \log(x)) - y - \alpha y g(\beta \log(y))} + \alpha \abs{x g(\beta \log(x)) - y g(\beta \log(y))}.
\]
As the derivative of $f(x) = x g(\beta \log(x))$ is $f'(x) = g(\beta \log(x)) + \beta g'(\beta \log(x))$, we see that
\[
\abs{x g(\beta \log(x)) - y g(\beta \log(y))} \leq M(1 + \beta) \abs{x - y}.
\]
Therefore, it immediately follows that $g_{\alpha, \beta} \in \Ll_{bi}$ as long as $\alpha<\frac{1}{M(1+\beta)}$.

For the second part, as $g$ is twice differentiable, we have that \[
g''_{\alpha, \beta}(x) = \frac{\alpha \beta}{x}(g'(\beta \log x) + g''(\beta \log x))
\]
so that $g_{\alpha, \beta} \in \mathcal{L}_{bis}$ by the hypothesis $\lim_{x \rightarrow \infty} \frac{g''(\beta \log x)}{x} = 0$. Condition \eqref{item3} implies that $g_{\alpha, \beta} \in \Ll_{bids}$.
\end{proof}

In order to present an example of a function satisfying the conditions of Proposition~\ref{prop:g_in_bids}, we will use Kronecker's Density Theorem (see, for instance, \cite{HW,KH}).
In \cite{FST}, the latter theorem was first used in Lineability (the search for algebraic structures inside non-linear sets \cite{ABPS}). See also \cite{FRST} and \cite{BB} for a better understanding on how this theorem is applied in this context.

\begin{theorem}[Kronecker's Density Theorem] \label{kronecker}
If $\beta_1, \ldots, \beta_n \in \R$ are $\Q$-linearly independent, then 
\begin{equation*}
    \mathcal{D} := \{ (k \beta_1 - [k \beta_1], \ldots, k \beta_n - [k\beta_n]): k \in \N\}
\end{equation*}
is dense in $[0,1]^n$ (here, $[x]$ stands for the integer part of $x$).
\end{theorem}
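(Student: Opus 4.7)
The plan is to derive Kronecker's theorem as a corollary of the sharper equidistribution theorem of Weyl, using Fourier-analytic criteria on the torus $\T^n = \R^n/\Z^n$. First I would reduce density to equidistribution of the orbit $x_k := (k\beta_1, \ldots, k\beta_n) \bmod 1$ in $[0,1]^n$; since an equidistributed sequence necessarily meets every open box of positive measure, this is in fact stronger than what the statement asks for.

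Next I would invoke Weyl's criterion: a sequence $(x_k)$ in $\T^n$ is equidistributed if and only if, for every nonzero $m \in \Z^n$,
\begin{equation*}
\frac{1}{N}\sum_{k=1}^N e^{2\pi i \langle m,\, x_k\rangle} \longrightarrow 0 \qquad \text{as } N \to \infty.
\end{equation*}
For the specific sequence above, $\langle m, x_k\rangle \equiv k\gamma \pmod 1$ with $\gamma := \sum_{j=1}^n m_j \beta_j$. The exponential sum then collapses to a geometric progression whose modulus is bounded by $\frac{2}{N\,\abs{1 - e^{2\pi i \gamma}}}$ as long as $\gamma \notin \Z$, and this tends to $0$. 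The entire argument therefore reduces to checking that no nonzero $m \in \Z^n$ produces an integer value $\gamma$.

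The main (and essentially only) subtlety is precisely this last step: it requires that $1, \beta_1, \ldots, \beta_n$ be \emph{jointly} linearly independent over $\Q$, which is slightly stronger than the literal wording of the hypothesis (e.g.\ the case $n=1$, $\beta_1 = 1$ literally satisfies the stated hypothesis, yet the orbit $\{k\} \bmod 1 = \{0\}$ is not dense in $[0,1]$). I would therefore either interpret the hypothesis in the standard Hardy--Wright sense from the outset, or simply record this strengthening as needed for the proof. An alternative route, which I would only sketch, replaces Weyl's criterion by the structure theorem for closed subgroups of $\T^n$: the closure of the orbit is a closed subgroup, and any proper closed subgroup of $\T^n$ is contained in the kernel of some nontrivial continuous character, which would produce an integer relation among $1, \beta_1, \ldots, \beta_n$ contradicting the independence assumption.
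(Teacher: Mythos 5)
The paper does not actually prove this statement; it is quoted as a classical result with a pointer to \cite{HW,KH}, so there is no internal argument to compare yours against. Your route --- reduce density to equidistribution, apply Weyl's criterion, and collapse the exponential sum $\frac1N\sum_{k\le N} e^{2\pi i k\gamma}$ with $\gamma=\sum_j m_j\beta_j$ to a geometric progression --- is the standard modern proof and is correct as outlined; the alternative sketch via closed subgroups of $\T^n$ and characters is equally standard and also works. What your write-up buys over a bare citation is that it isolates exactly where the arithmetic hypothesis enters (namely, ruling out $\gamma\in\Z$ for nonzero $m\in\Z^n$), and in doing so it exposes a genuine imprecision in the statement as printed: $\Q$-linear independence of $\beta_1,\dots,\beta_n$ alone is not enough, and one needs $1,\beta_1,\dots,\beta_n$ to be jointly independent over $\Q$ (your counterexample $n=1$, $\beta_1=1$ is decisive; so is $n=1$, $\beta_1=\tfrac12$). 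This is how the theorem is stated in Hardy--Wright, and it is the form the paper implicitly uses. The only caution I would add is that this corrected hypothesis must then be verified at each point of application in Sections 5--6 (e.g.\ one needs $1$ together with the numbers $k\beta_j\gamma^2$ or $k\beta_j\log 2$ to be $\Q$-independent, which constrains the choice of the set $\mathcal B$); flagging the strengthened hypothesis without checking it downstream would leave a loose end in the paper, though not in your proof of the theorem itself.
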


\begin{example}\label{example}
Take $g(x) = \sin x$ and $\beta' > 0$.
Let $\gamma = \log \frac{1}{2}$ and $\beta=-2\pi \beta' \gamma$.
Of course, $g$ is bounded with bounded derivative and $\lim_{x \rightarrow \infty} \frac{g''(\beta \log x)}{x} = 0$.
Let us check \eqref{item3}. 
By letting $y = \log \log n$ and $z = \log \log m$, we see that we want to estimate
\[
\left|(2g(\beta y) - g(\beta (\gamma + y))) - (2g(\beta z) - g(\beta (\gamma + z)))\right|.
\]
Given $\varepsilon>0$ and going down enough in the real line, we may approximate any real number by one of the form $\log \log n$ up to $\varepsilon$. Thus, since $g$ is Lipschitz, it is enough to show that
\[
\limsup_{y, z \rightarrow \infty} \left|(2g(\beta y) - g(\beta (\gamma + y))) - (2g(\beta z) - g(\beta (\gamma + z)))\right| > 0.
\]
Let $y = -n\gamma$ for some $n$. Then
\begin{eqnarray*}
2g(\beta y) - g(\beta (\gamma + y)) & = & 2 \sin (-\beta n \gamma) - \sin( - \beta n\gamma+\beta\gamma) \\
& = & 2 \sin (- \beta n \gamma) - (\sin(-\beta n \gamma) \cos(\beta \gamma) + \sin(\beta \gamma) \cos(-\beta n \gamma)) \\
& = & \sin(-\beta n \gamma) (2 - \cos(\beta \gamma)) - \sin(\beta \gamma) \cos(-\beta n \gamma) \\
& = & a \sin(-\beta n \gamma) + b \cos(-\beta n \gamma) \\
& = & a \sin \left(2\pi \beta' \gamma^2 n\right) + b \cos\left(2\pi\beta' \gamma^2 n\right)
\end{eqnarray*}
with $a = 2 - \cos(\beta \gamma) \geq 1$ and $b=-\sin(\beta \gamma)$. 
(Note that $a\neq b$.) 
Let us define on $\mathbb R$ the continuous function $L(t)=a \sin(2\pi t) + b \cos(2\pi t)$.
Since the sine and cosine functions are $2\pi$-periodic, Theorem~\ref{kronecker} yields that
    \[
    L(\mathcal D) = \left\{ a \sin \left(2\pi k \beta' \gamma^2\right) + b \cos \left(2\pi k\beta' \gamma^2\right) : k\in \mathbb N \right\}
    \]
is dense in $L([0,1])$.
Thus, we deduce that
\[
\limsup_{n, m \rightarrow \infty} \left|(a \sin(-\beta n \gamma) + b \cos(-\beta n \gamma)) - (a \sin(-\beta m \gamma) + b \cos(-\beta m \gamma))\right|>0,
\]
which is equivalent to
\[
\limsup_{y, z \rightarrow \infty} \left|(2g(\beta y) - g(\beta (\gamma + y))) - (2g(\beta z) - g(\beta (\gamma + z)))\right| > 0.
\]
Therefore, $g = \sin$ satisfies \eqref{item3} with $\beta=-2\pi \beta' \log \frac{1}{2}$ for any $\beta'>0$.
\end{example}
\color{black}

We also need the following technical lemma.

\begin{lemma}\label{technicallemma}
    Let $n\in \mathbb N$, $\lambda_1, \ldots, \lambda_n,\ \alpha_1,\ldots,\alpha_n > 0$, $\alpha=\max\{\alpha_j : 1\leq j \leq n\}$.
    If $\beta_1,\ldots,\beta_n>0$ are $\mathbb Q$-linearly independent and $\alpha \left(1 + 2 \pi \beta \log 2\right)<1$ with $\beta=\max\{ \beta_j : 1\leq j\leq n \}$, then $\sum\limits_{j=1}^n \lambda_j g_{\alpha_j, -2\pi \beta_j\log \frac{1}{2}} \in \mathcal{L}_{bids}$ for $g=\sin$.
\end{lemma}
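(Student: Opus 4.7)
The plan is to verify in turn each of the defining conditions of $\mathcal{L}_{bids}$ for $F := \sum_{j=1}^n \lambda_j\, g_{\alpha_j, \beta_j'}$ with $\beta_j' := -2\pi \beta_j \log \tfrac{1}{2} = 2\pi \beta_j \log 2$. Writing $\Lambda = \sum_j \lambda_j$ and $P(u) = \sum_j \lambda_j \alpha_j \sin(\beta_j' u)$, so that $F(x) = \Lambda x + x\, P(\log x)$, I would first differentiate to obtain $F'(x) = \Lambda + P(\log x) + P'(\log x)$. The bounds $|P|\le \Lambda\alpha$ and $|P'| \le 2\pi \Lambda \alpha \beta \log 2$ combine with the hypothesis $\alpha(1 + 2\pi\beta\log 2) < 1$ to yield $F'(x) \ge \Lambda\bigl(1 - \alpha(1 + 2\pi\beta\log 2)\bigr) > 0$, so $F$ is strictly increasing and bi-Lipschitz on $[0,\infty)$; combined with $F(0)=0$ and $F(x)\to\infty$, this places $F$ in $\mathcal{L}_{bi}$. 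Differentiating once more, $F''(x) = (P'(\log x) + P''(\log x))/x \to 0$, hence $F \in \mathcal{L}_{bis}$.

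The main step is the $\mathcal{L}_{bid}$ condition. Setting $u = \log\log n$ and computing directly,
\[
\frac{F(\log n) - F(\log n^{1/2})}{\log n^{1/2}} = \Lambda + 2P(u) - P(u + \gamma), \qquad \gamma := \log \tfrac{1}{2},
\]
so the condition reduces to showing that $H(u) := 2P(u) - P(u+\gamma)$ takes asymptotically distinct values. A trigonometric expansion yields
\[
H(u) = \sum_{j=1}^n \lambda_j\alpha_j \bigl[(2 - \cos(\beta_j'\gamma))\sin(\beta_j' u) - \sin(\beta_j'\gamma)\cos(\beta_j' u)\bigr] = \sum_{j=1}^n R_j \sin(\beta_j' u + \phi_j),
\]
with strictly positive amplitudes $R_j = \lambda_j \alpha_j \sqrt{5 - 4\cos(\beta_j'\gamma)} \ge \lambda_j\alpha_j > 0$; in particular $H$ is a non-constant, Lipschitz, almost-periodic function.

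Since $(\beta_j)$ is $\mathbb{Q}$-linearly independent, so is the family $\nu_j := \beta_j (\log 2)^2$, and Theorem~\ref{kronecker} then provides density of $\{(\{k\nu_1\},\ldots,\{k\nu_n\}) : k \in \mathbb{N}\}$ in $[0,1]^n$. Evaluating $H$ at $u = k\log 2$ gives $\beta_j' u/(2\pi) = k\nu_j$, so by selecting integers $k^\pm \to \infty$ whose coordinates $(\{k^\pm \nu_j\})_j$ approach $(\tfrac14 - \tfrac{\phi_j}{2\pi})_j$ and $(-\tfrac14 - \tfrac{\phi_j}{2\pi})_j$ respectively, I obtain $H(k^\pm \log 2) \to \pm \sum_j R_j$. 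Finally, as in the proof of Example~\ref{example}, the fact that any large real $u$ lies within $\varepsilon$ of some $\log\log N$ for a suitable integer $N$, together with the Lipschitz continuity of $H$, transfers this behaviour to $u = \log\log N$, so that $\limsup_{N,M\to\infty} |H(\log\log N) - H(\log\log M)| \ge 2\sum_j R_j > 0$, which is exactly the $\mathcal{L}_{bid}$ condition.

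The main obstacle is the Kronecker density step: Theorem~\ref{kronecker} is applied to the family $(\nu_j)$, and one must check that $\mathbb{Q}$-linear independence is inherited from $(\beta_j)$ through the common nonzero factor $(\log 2)^2$, which is immediate from the definition. The structural ingredient that ultimately makes the argument succeed is the uniform positivity of every amplitude $R_j$: no destructive interference can collapse $H$ to a constant, so both extrema $\pm \sum_j R_j$ are approached along the Kronecker orbits, producing the strict inequality required by $\mathcal{L}_{bid}$.
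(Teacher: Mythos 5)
Your proof is correct and follows essentially the same route as the paper's: a derivative bound on the perturbation term to get the bi-Lipschitz property from $\alpha(1+2\pi\beta\log 2)<1$, the same reduction of the $\mathcal{L}_{bid}$ condition to the oscillating part evaluated at $u=k\log 2$, and Kronecker's theorem applied to the frequencies $\beta_j(\log 2)^2$. Your amplitude--phase rewriting $R_j\sin(\beta_j'u+\phi_j)$ with $R_j\ge\lambda_j\alpha_j>0$ is only a slightly more explicit version of the paper's $a_j\sin+b_j\cos$ computation and yields the same conclusion.
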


\begin{proof}
Let $\lambda := \sum\limits_{j=1}^n \lambda_j$, $\gamma=\log \frac{1}{2}$ and $h := \sum\limits_{j=1}^n \lambda_j g_{\alpha_j, -2\pi \beta_j\gamma}$.
It is obvious that $h\in \mathcal L$, so we will first prove that $h\in \mathcal L_{bi}$.
By the reverse triangular inequality, we have that
\begin{equation*}
    |\lambda x - \lambda y| - \left| x \sum_{j=1}^n \lambda_j \alpha_j \sin(-2 \pi \beta_j \gamma \log x) - y \sum_{j=1}^n \lambda_j \alpha_j \sin(-2 \pi \beta_j\gamma \log x) \right|  \leq |h(x) - h(y)|
\end{equation*}
Set $\ell(x) := x f(x)$ with $f(x):=\sum\limits_{j=1}^n \lambda_j \alpha_j \sin(-2 \pi \beta_j \gamma \log x)$. We have that 
\begin{equation*}
    \ell'(x) = \sum_{j=1}^n \lambda_j \alpha_j \sin(-2 \pi \beta_j \gamma \log x) - 2 \pi \gamma \sum_{j=1}^n \lambda_j \alpha_j \beta_j \cos(-2 \pi \beta_j \gamma \log x).
\end{equation*}
Then,
\begin{equation*}
    |\ell'(x)| \leq \alpha \lambda - 2 \pi \alpha \beta \gamma \lambda = \alpha \lambda (1 - 2 \pi \beta \gamma).
\end{equation*}
Therefore, 
\begin{equation*}
\lambda(1 - \alpha(1 - 2 \pi \beta\gamma))|x-y| \leq |h(x) - h(y)|,
\end{equation*}
which implies that $h\in \mathcal L_{bi}$.
Now it is straightforward that $h\in \mathcal L_{bis}$.
It remains to show that $h\in \mathcal L_{bids}$.
Similarly to Example~\ref{example}, by letting $y = \log m$ and $z = \log p$, it is enough to show that
    \begin{equation}\label{conbids}
        \limsup_{y, z \rightarrow \infty} \left|(2h(y) - h(\gamma + y)) - (2h(z) - h(\gamma + z))\right| > 0. 
    \end{equation}
By letting $y=-m\gamma$ for some $m$, we have
\begin{eqnarray*}
2h(y) - h(\gamma + y) & = & \sum_{j=1}^n \left( a_j \sin \left(2\pi \beta_j \gamma^2 m\right) + b_j \cos\left(2\pi\beta_j \gamma^2 m\right) \right),
\end{eqnarray*}
where $a_j=\lambda_j \alpha_j (2-\cos(2\pi \beta_j \gamma))>0$ and $b_j=\lambda_j \alpha_j \sin(2\pi \beta_j \gamma)$ for every $1\leq j \leq n$.
Observe that $a_j\neq b_j$ for any $1\leq j\leq n$.
As above, define the continuous function $L$ on $\mathbb R^n$ as
    \begin{equation*}
    L(t_1, \ldots, t_n) := (a_1\sin(2 \pi t_1)+b_1\cos(2\pi t_1), \ldots, a_n\sin(2 \pi t_n)+b_n\cos(2\pi t_n)).
\end{equation*}
Then, the set 
\begin{equation*}
    L(\mathcal D) = \left\{ \left(a_1 \sin \left(2\pi k \beta_1 \gamma^2\right) + b_1 \cos \left(2\pi k\beta_1 \gamma^2\right),\ldots,a_n \sin \left(2\pi k \beta_n \gamma^2\right) + b_n \cos \left(2\pi k\beta_n \gamma^2\right) \right) : k\in \mathbb N \right\}
\end{equation*}
is dense in $L([0, 1]^n)$ by Theorem~\ref{kronecker}.
As an immediate consequence, we get inequality \eqref{conbids}, which concludes the proof.
\end{proof}

\begin{theorem}\label{bidsconeable} The set $\mathcal{L}_{bids}$ is coneable.
\end{theorem}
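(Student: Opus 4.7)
The plan is to exhibit an infinite linearly independent family $\{f_k\}_{k \in \mathbb{N}}$ inside $\mathcal{L}_{bids}$ whose convex cone is entirely contained in $\mathcal{L}_{bids}$. Lemma \ref{technicallemma} (combined with the fact, checked in Example \ref{example}, that $g = \sin$ satisfies condition \eqref{item3}) does the heavy lifting: it guarantees that $\sum_{j=1}^n \lambda_j g_{\alpha_j, -2\pi \beta_j \log(1/2)} \in \mathcal{L}_{bids}$ whenever the $\beta_j$'s are positive and $\mathbb{Q}$-linearly independent and the size condition $\alpha(1 + 2\pi\beta\log 2) < 1$ holds, where $\alpha = \max_j \alpha_j$ and $\beta = \max_j \beta_j$. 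The crucial observation is that this size condition is \emph{independent of the coefficients} $\lambda_j$, so if I fix $\alpha_j$ equal to one small constant $\alpha_0$ for every $k$ and draw the $\beta_k$'s from a uniformly bounded family, any positive linear combination of the corresponding $f_k$'s will automatically satisfy the hypotheses of the lemma.

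Concretely, I would first fix a bound $B > 0$ and choose $\alpha_0 > 0$ so small that $\alpha_0(1 + 2\pi B \log 2) < 1$. Next, I would select an infinite sequence $\{\beta_k\}_{k \in \mathbb{N}} \subset (0, B]$ of pairwise $\mathbb{Q}$-linearly independent positive reals; a concrete choice is $\beta_k := \sqrt{p_k}/r_k$, where $p_k$ is the $k$th prime and $r_k$ is a positive rational chosen so that $\sqrt{p_k}/r_k \leq B$. Since $\{\sqrt{p_k}\}_{k \in \mathbb{N}}$ is $\mathbb{Q}$-linearly independent and this property is preserved under elementwise multiplication by nonzero rationals, the sequence $\{\beta_k\}$ remains $\mathbb{Q}$-linearly independent. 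With $g = \sin$, I would then set
\[
f_k(x) := g_{\alpha_0, -2\pi \beta_k \log(1/2)}(x) = x + \alpha_0 x \sin\bigl(2\pi \beta_k \log 2 \cdot \log x\bigr).
\]

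To verify linear independence of $\{f_k\}_{k \in \mathbb{N}}$, suppose $\sum_{j=1}^n \lambda_j f_{k_j} \equiv 0$ on $(0, \infty)$; dividing by $x$ and substituting $u = \log x$ reduces the identity to
\[
\sum_{j=1}^n \lambda_j + \alpha_0 \sum_{j=1}^n \lambda_j \sin(c_{k_j} u) = 0 \qquad \text{for all } u \in \mathbb{R},
\]
where $c_{k_j} := 2\pi \beta_{k_j} \log 2$ are pairwise distinct positive reals. The standard linear independence of $\{1\} \cup \{\sin(c_1 u), \ldots, \sin(c_n u)\}$ for distinct positive frequencies forces every $\lambda_j$ to vanish. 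Finally, any finite positive combination $h = \sum_{j=1}^n \lambda_j f_{k_j} = \sum_{j=1}^n \lambda_j g_{\alpha_0, -2\pi \beta_{k_j} \log(1/2)}$ lies in $\mathcal{L}_{bids}$ by a direct application of Lemma \ref{technicallemma}: here $\alpha = \alpha_0$, $\beta = \max_j \beta_{k_j} \leq B$, and hence $\alpha(1 + 2\pi \beta \log 2) \leq \alpha_0(1 + 2\pi B \log 2) < 1$. The only subtle ingredient is constructing a bounded $\mathbb{Q}$-linearly independent sequence $\{\beta_k\}$, but rescaling $\sqrt{p_k}$ by suitable rationals handles this routinely.
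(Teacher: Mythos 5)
Your proposal is correct and follows essentially the same route as the paper: reduce everything to Lemma~\ref{technicallemma} (with $g=\sin$ and a uniformly bounded, $\mathbb{Q}$-linearly independent family of $\beta$'s, so that the size condition $\alpha(1+2\pi\beta\log 2)<1$ holds for every positive combination), and then verify linear independence of the generators. The only difference is cosmetic and lies in that last step: the paper derives independence via Kronecker's Density Theorem and a sign argument, whereas you invoke the classical linear independence of $\{1,\sin(c_1u),\ldots,\sin(c_nu)\}$ for distinct positive frequencies, which is equally valid and slightly more elementary.
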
 

\begin{proof} 


\vspace{0.5cm}

Take $\mathcal{B} \subseteq \left(0,1\right)$ an infinite linearly independent subset of $\R$ over $\Q$.
Observe that, by Lemma~\ref{technicallemma}, it is enough to show that the family $\left\{ g_{10^{-1}, 2 \pi \beta \log 2}: \beta \in \mathcal{B} \right\}$ with $g=\sin$ is linearly independent.


\vspace{0.2cm}
Let $\lambda_1, \ldots, \lambda_n \in \R \setminus \{0\}$ and $\beta_1, \ldots, \beta_n \in \mathcal{B}$ be distinct. By contradiction, assume that 
\begin{equation*}
    \sum_{j=1}^n \lambda_j g_{10^{-1}, 2 \pi \beta_j \log 2} = 0.
\end{equation*}
We have that, for every $x > 0$,
\begin{equation*}
0 = \sum_{j=1}^n \lambda_j g_{10^{-1}, 2 \pi \beta_j \log 2} = \lambda x + \frac{x}{10} \sum_{j=1}^n \lambda_j \sin(2 \pi \beta_j \log 2 \log x).
\end{equation*}
For $x = 1$ we obtain $\lambda = 0$, so that we have, for all $x > 0$,
\begin{equation} \label{eq1}
    \sum_{j=1}^n \lambda_j \sin(2 \pi \beta_j \log 2 \log x) = 0.
\end{equation}
Consider now the function $L: \R^n \rightarrow [-1,1]^n$ given by 
\begin{equation*}
    L(t_1, \ldots, t_n) = (\sin(2 \pi t_1), \ldots, \sin(2 \pi t_n))
\end{equation*}
which is continuous. We have that the set 
\begin{equation*}
    L(\mathcal{D}) = \{ (\sin(2 \pi k \beta_1 \log 2), \ldots, \sin(2 \pi k \beta_n \log 2)): k \in \N \} 
\end{equation*}
is dense in $[-1,1]^n = L([0,1]^n)$ by Theorem \ref{kronecker}.
So, there exists $k \in \N$ such that 
\begin{equation*}
    \sin(2 \pi k \beta_i \log 2 ) > \frac{1}{2} \ \mbox{if} \ \lambda_i > 0 \ \ \ \mbox{and} \ \ \ \sin(2 \pi k \beta_i \log 2 ) < - \frac{1}{2} \ \mbox{if} \ \lambda_i < 0.
\end{equation*}
Take now $x = e^k$. Then, from (\ref{eq1}), we have that 
\begin{equation*}
    \sum_{j=1}^n \lambda_j \sin(2 \pi k \beta_j \log 2 ) = 0.
\end{equation*}
Denote by $\mathcal{J}^+ := \{ j: \lambda_j > 0\}$ and $\mathcal{J}^- := \{ j: \lambda_j < 0 \}$. Thus, we have 
\begin{eqnarray*}
0 &=& \sum_{j=1}^n \lambda_j \sin(2 \pi k \beta_j \log 2 ) \\
&=& \sum_{j \in \mathcal{J}^+} \lambda_j \sin(2 \pi k \beta_j \log 2) + \sum_{j \in \mathcal{J}^-} |\lambda_j| (- \sin(2 \pi k \beta_j \log 2)) \\
&>& \frac{1}{2} \sum_{j \in \mathcal{J}^+} \lambda_j + \frac{1}{2} \sum_{j \in \mathcal{J}^-} |\lambda_j| \\
&=& \frac{1}{2} \sum_{j=1}^n |\lambda_j| > 0
\end{eqnarray*}
which is a contradiction.
\end{proof}
\color{black}

We single out a consequence of the last result.

\begin{corollary}
    The set of bi-Lipschitz maps from $[0, \infty)$ into $\K$ is coneable.
\end{corollary}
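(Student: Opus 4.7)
The plan is to deduce the corollary directly from Theorem~\ref{bidsconeable}, using only the chain of inclusions between the function classes defined at the start of Section~\ref{sec:distinguish} and Section~\ref{sec:bi_lip}. This should require essentially no new work, only an observation about how the subclasses sit inside the set of all bi-Lipschitz maps from $[0,\infty)$ to $\K$.

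First I would recall that, by definition, $\Ll_{bids} = \Ll_{bis} \cap \Ll_{bid}$, and both $\Ll_{bis}$ and $\Ll_{bid}$ are subclasses of $\Ll_{bi}$, which in turn is a subclass of $\Ll$. In particular, every $\varphi \in \Ll_{bids}$ is a bi-Lipschitz map from $[0,\infty)$ to $\K$ (with the additional normalization $\varphi(0) = 0$ and $\varphi(t)\to\infty$, which are irrelevant for the purposes of this corollary). Hence we have the inclusion
\[
\Ll_{bids} \subseteq \{\varphi : [0,\infty) \to \K : \varphi \text{ is bi-Lipschitz}\}.
\]

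Next, I would invoke Theorem~\ref{bidsconeable} to obtain an infinite $\R$-linearly independent subset $\{\varphi_\beta : \beta \in \mathcal{B}\} \subseteq \Ll_{bids}$ whose generated convex cone $C$ is still contained in $\Ll_{bids}$. By the inclusion above, $C$ is contained in the set of bi-Lipschitz maps from $[0,\infty)$ to $\K$, and the generating set is still linearly independent in the ambient vector space of all functions $[0,\infty) \to \K$. This is precisely the definition of coneability recalled before Proposition~\ref{prop:g_in_bids}, so the conclusion follows.

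There is really no obstacle here: the statement is a formal consequence of Theorem~\ref{bidsconeable} together with the defining inclusion $\Ll_{bids} \subseteq \Ll_{bi}$. The only thing one has to be slightly careful about is that the cone and the linear independence witnesses remain valid when viewed inside the (larger) ambient space of all real- or complex-valued functions on $[0,\infty)$, which is immediate since linear independence over $\R$ and the convex-cone condition are preserved under passage to any superset containing the generators.
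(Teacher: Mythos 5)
Your argument is correct and matches the paper's: the corollary is stated there simply as a consequence of Theorem~\ref{bidsconeable}, since $\Ll_{bids}\subseteq\Ll_{bi}$ consists of bi-Lipschitz maps and coneability passes to supersets of the generating cone. Nothing further is needed.
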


Back to twisted Hilbert spaces, we have the following result.

\begin{proposition}\label{notprojcone}
Let $\beta_1 \neq \beta_2$ and assume that $g_{\alpha, \beta_1}, g_{\alpha, \beta_2} \in \Ll_{bids}$ for $g = \sin$. Then $\Omega_{g_{\alpha, \beta_1}}$ is not projectively equivalent to $\Omega_{g_{\alpha, \beta_2}}$.
\end{proposition}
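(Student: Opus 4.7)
The plan is to invoke the paper's characterization of projective equivalence: $\Omega_{g_{\alpha,\beta_1}}$ and $\Omega_{g_{\alpha,\beta_2}}$ are projectively equivalent if and only if there exists $a \neq 0$ with $\sup_{t > 0} |g_{\alpha,\beta_1}(t) - a g_{\alpha,\beta_2}(t)| < \infty$. I will show this supremum is infinite for every $a \neq 0$, which will give the result. Throughout, I use that the construction in Proposition~\ref{prop:g_in_bids} forces $\beta_1, \beta_2 > 0$.

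The first step is the substitution $t = e^y$, which factors the polynomial growth out of the oscillation. A direct computation gives
\[
g_{\alpha,\beta_1}(e^y) - a g_{\alpha,\beta_2}(e^y) = e^y F_a(y), \qquad F_a(y) := (1-a) + \alpha \sin(\beta_1 y) - a\alpha \sin(\beta_2 y).
\]
Since $F_a$ is bounded, a finite supremum in $t$ would force $F_a(y) \to 0$ as $y \to \infty$. It therefore suffices to exhibit a sequence $y_n \to \infty$ along which $|F_a(y_n)|$ stays bounded away from zero.

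Next, I observe that $F_a \not\equiv 0$: the three functions $1, \sin(\beta_1 \cdot), \sin(\beta_2 \cdot)$ are linearly independent on $\R$ whenever $\beta_1 \neq \beta_2$ are both positive, and the coefficient of $\sin(\beta_1 y)$ in $F_a$ is $\alpha \neq 0$. Hence $\|F_a\|_\infty > 0$.

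The main step, and the principal obstacle, is to upgrade this non-triviality to recurrence at infinity: for every $N$, some $y \geq N$ must satisfy $|F_a(y)| \geq \tfrac{1}{2}\|F_a\|_\infty$. I will argue by cases on whether $\beta_1/\beta_2$ is rational. If $\beta_1/\beta_2 \in \Q$, then $F_a$ is non-constant and periodic, and the conclusion is immediate by translating any maximizer by multiples of the period. If $\beta_1/\beta_2 \notin \Q$, then the flow $y \mapsto (\beta_1 y \bmod 2\pi,\, \beta_2 y \bmod 2\pi)$ has dense orbit in $[0, 2\pi]^2$ (the continuous counterpart of Theorem~\ref{kronecker}, i.e., the classical minimality of irrational flows on the two-torus), and its tail over $y \geq N$ is still dense, so $F_a$ attains values arbitrarily close to $\|F_a\|_\infty$ on $[N, \infty)$. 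In either case, $e^{y_n} |F_a(y_n)| \to \infty$ along a suitable sequence $y_n \to \infty$, contradicting finiteness of the supremum and completing the proof.
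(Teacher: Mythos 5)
Your proof is correct and follows the same basic route as the paper's: pass to $y=\log x$, reduce the question to whether the bounded trigonometric expression $F_a(y)$ tends to zero at infinity, and rule that out by a density-of-orbits argument. You are, however, more careful than the paper in two places where its argument is genuinely incomplete. First, you retain the term $(1-a)$ coming from the linear parts of $g_{\alpha,\beta_1}$ and $a\,g_{\alpha,\beta_2}$, whereas the paper silently reduces to the pure sine combination $\sin(\beta_1 y)-\beta\sin(\beta_2 y)$, which only accounts for the case $a=1$. Second, the paper's choice $y_n=2\pi n/\beta_2$ followed by an appeal to Theorem~\ref{kronecker} implicitly requires $\beta_1/\beta_2$ to be irrational (which holds for the $\beta$'s ultimately drawn from a $\Q$-independent set, but not under the bare hypothesis $\beta_1\neq\beta_2$ of the proposition); your case split, using periodicity of $F_a$ when $\beta_1/\beta_2\in\Q$ and minimality of the linear flow on $\T^2$ when $\beta_1/\beta_2\notin\Q$, covers the statement as written. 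The only point worth making explicit is the (standard) linear independence of $1,\sin(\beta_1\cdot),\sin(\beta_2\cdot)$ and the fact that, by density of the forward orbit, $\sup_{y\geq N}|F_a(y)|$ actually equals $\|F_a\|_\infty$; both are routine.
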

\begin{proof}
We need to show that for any $\beta \neq 0$
\[
\sup_{x > 0} \left|x\right| \left|\sin(\beta_1 \log x) - \beta \sin (\beta_2 \log x)\right| = \infty.
\]
But that is equivalent to
\[
\limsup_{y > 0} \left|\sin(\beta_1 y) - \beta \sin (\beta_2 y)\right| > 0.
\]
Take $y = \frac{2\pi n}{\beta_2}$ so that the second term vanishes. Apply Kronecker's Density Theorem to the first and we are done.
\end{proof}

As a consequence of the previous results, we obtain the following.
\begin{theorem}\label{thm:first_cone}
 There is an infinite-dimensional cone $\mathcal{C}$ of bi-Lipschitz maps such that for every $\varphi \in \mathcal{C}$, $Z(\varphi)$ is a singular twisted Hilbert space. Furthermore, if $\varphi, \psi \in \mathcal{C}$ and $\varphi$ is not a multiple of $\psi$, then $Z(\varphi)$ is incomparable to $Z(\psi)$.
\end{theorem}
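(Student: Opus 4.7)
The plan is to take $\mathcal{C}$ to be exactly the convex cone produced in Theorem~\ref{bidsconeable}, namely the cone generated by the linearly independent family $\{g_{10^{-1},2\pi\beta\log 2}:\beta\in\mathcal{B}\}$ with $g=\sin$ and $\mathcal{B}\subseteq(0,1)$ an infinite $\mathbb{Q}$-linearly independent subset of $\mathbb{R}$. Lemma~\ref{technicallemma} and Theorem~\ref{bidsconeable} together guarantee that $\mathcal{C}\subseteq\mathcal{L}_{bids}\subseteq\mathcal{L}_{bi}$ is an infinite-dimensional convex cone, so Theorem~\ref{thm:singularity} immediately implies that $Z(\varphi)$ is a singular (and cosingular) twisted Hilbert space for every $\varphi\in\mathcal{C}$.

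For the incomparability clause, since $\mathcal{C}\subseteq\mathcal{L}_{bid}\cap\mathcal{L}_{bis}$, Theorem~\ref{thm:distinguishing} reduces the task to showing that, whenever $\varphi,\psi\in\mathcal{C}$ and $\varphi$ is not a scalar multiple of $\psi$, the maps $\Omega_\varphi$ and $\Omega_\psi$ are not projectively equivalent. Writing $\varphi=\sum_{j}\lambda_j\, g_{10^{-1},\omega_j}$ and $\psi=\sum_{k}\mu_k\, g_{10^{-1},\omega'_k}$ with $\lambda_j,\mu_k>0$ and distinct frequencies $\omega_j=2\pi\beta_j\log 2$, $\omega'_k=2\pi\beta'_k\log 2$ ($\beta_j,\beta'_k\in\mathcal{B}$), a direct computation gives, for every $c\neq 0$,
\[
\varphi(x)-c\psi(x) = \left(\sum_j\lambda_j - c\sum_k\mu_k\right) x + \frac{x}{10}\, F(\log x),
\]
where $F(t):=\sum_j\lambda_j\sin(\omega_j t)-c\sum_k\mu_k\sin(\omega'_k t)$. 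If the left-hand side were uniformly bounded on $(0,\infty)$, dividing by $x$ and letting $x\to\infty$ would force $F(t)$ to converge to some constant $L$ as $t\to\infty$.

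To close the argument, I would show that $F(t)\to L$ already implies $F\equiv 0$. Cesàro-averaging $F$ on $[0,T]$ yields mean $L$ on one hand and $0$ on the other (each sine at positive frequency has vanishing Cesàro mean), so $L=0$; averaging $F(t)\sin(\omega t)$ for each relevant frequency and using orthogonality then kills every coefficient of $F$. The resulting identity $\sum_j\lambda_j\sin(\omega_j t)=c\sum_k\mu_k\sin(\omega'_k t)$, combined with linear independence of sines at distinct positive frequencies, forces $\{\omega_j\}=\{\omega'_k\}$ as sets and $\lambda_j=c\mu_k$ at the matched indices, i.e., $\varphi=c\psi$, contradicting the hypothesis. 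The main obstacle is precisely this last reduction: Proposition~\ref{notprojcone} only covers one generator on each side, so a genuine almost-periodic / Kronecker-density argument of the sort already deployed in Example~\ref{example} and Lemma~\ref{technicallemma} is needed in order to upgrade it to arbitrary finite positive combinations.
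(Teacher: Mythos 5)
Your proposal is correct and follows the same skeleton as the paper: take $\mathcal{C}$ to be the cone from Theorem~\ref{bidsconeable}, get singularity from Lemma~\ref{technicallemma} plus Theorem~\ref{thm:singularity}, and reduce incomparability to non-projective-equivalence via Theorem~\ref{thm:distinguishing}. Where you diverge is in the decisive step, namely showing that two non-proportional elements $\varphi=\sum_j\lambda_j g_{10^{-1},\omega_j}$ and $\psi=\sum_k\mu_k g_{10^{-1},\omega'_k}$ of the cone are never projectively equivalent. The paper isolates the same decomposition $\varphi-c\psi = Ax+\tfrac{x}{10}F(\log x)$, disposes of the case $A\neq 0$ immediately, and for $A=0$ appeals to Kronecker's Density Theorem (``the same kind of argument that has been used previously''), implicitly assuming all frequencies $\beta_i,\gamma_j$ involved are distinct. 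You instead observe that boundedness of $\varphi-c\psi$ forces $F(t)\to -10A$ and then kill $F$ by Ces\`aro averaging against $1$ and against each $\sin(\omega t)$, using orthogonality of sines at distinct positive frequencies. Your route is, if anything, more robust: it does not use the $\mathbb{Q}$-linear independence of the frequencies at this step, it handles the case where $\varphi$ and $\psi$ share generators (which the paper's ``all distinct'' normalization quietly sidesteps), and it works for arbitrary nonzero scalars $c$, including complex ones, without having to absorb $c$ into coefficients whose positivity the Kronecker argument would otherwise exploit. The only cosmetic remark is that your closing sentence presents the Ces\`aro step as an ``obstacle'' still to be overcome, when in fact the argument you sketch in the preceding sentences already completes it; you should state it as part of the proof rather than as a caveat.
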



\begin{proof}
    Take $\mathcal B\subseteq (0,1)$ an infinite linearly independent subset of $\mathbb R$ over $\mathbb Q$.
    Let $\lambda_1,\ldots,\lambda_n,\mu_1,\ldots,\mu_m>0$ and $\beta_1,\ldots,\beta_n, \gamma_1,\ldots,\gamma_m\in \mathcal B$ distinct, and take $g=\sum_{i=1}^n \lambda_i g_{10^{-1},2\pi\beta_i\log 2}$ and $h=\sum_{j=1}^m \mu_j g_{10^{-1},2\pi \gamma_j\log 2}$.
    We want to show that if $g\neq \alpha h$ for any $\alpha \in \mathbb R$, then $g$ and $h$ are not projectively equivalent.
    By absorbing $\alpha$ into the $\mu_j$'s, it is enough to prove that if $g\neq h$, then $g$ and $h$ are not equivalent.
    
    We have
        \[
        g-h=\left( \sum_{i=1}^n \lambda_i -\sum_{j=1}^m \mu_j \right)x+\frac{x}{10}\left( \sum_{i=1}^n \lambda_i \sin(\beta_i' log x) - \sum_{j=1}^m \mu_j \sin(\gamma_j' \log x) \right)
        \]
    where $\beta_i'=2\pi \beta_i \log 2$ and $\gamma_j'=2\pi \gamma_j \log 2$.
    If $\sum_{i=1}^n \lambda_i -\sum_{j=1}^m \mu_j \neq 0$, then we are done.
    Otherwise, it is sufficient to check that
        \[
        \limsup_{t\to \infty} \left| \sum_{i=1}^n \lambda_i \sin(\beta_i' t)-\sum_{j=1}^m \mu_j \sin(\beta_j't) \right|>0.
        \]
    But the latter follows from the same kind of argument that has been used previously.
\end{proof}

\color{black}

\section{Banach spaces not isomorphic to their conjugate dual and interpolation}\label{sec:interpolation}

Let us recall that, for $\alpha \in \R$, we set $\varphi_{\alpha}(t) = t^{1 + i \alpha}$. As we have mentioned before, Kalton shows in \cite{Kalton_Elementary} that, for each $\alpha \neq 0$, the space $Z(\varphi_{\alpha})$ is not isomorphic to its complex conjugate. It is natural to wonder  what is the connection of the spaces given by Theorem \ref{thm:first_cone} with that kind of behaviour.

We recall that if $X$ is a complex Banach space, its \emph{complex conjugate} is the Banach space $\overline{X}$, which is the same set as $X$ and it has the same sum, same norm with the difference that the multiplication by scalars is replaced by $\lambda \cdot x = \overline{\lambda} x$. The space $\overline{X^*}$ is called the \emph{conjugate dual} of $X$.

The first example of a Banach space that is {\it not} isomorphic to its complex conjugate was given by Bourgain in \cite{Bourgain}. Bourgain used a combination of probability with interpolation to build his example. In \cite{Kalton_Elementary}, Kalton gave an elementary and concrete example through the spaces $Z(\varphi_{\alpha})$. Since $Z(\varphi_{\alpha})^* \simeq Z(-\varphi_{\alpha})$ and $\overline{Z(\varphi_{\alpha})^*} \simeq Z(\varphi_{-\alpha})$, it follows that $Z(\varphi_{\alpha})$ is isomorphic to its dual but not to its conjugate dual.

\vspace{0.2cm}
Suppose that $\varphi, \psi : [0, \infty) \rightarrow \R$ are in the class $\Ll_{bids}$. Then so are $\varphi + i \psi$ and $\varphi - i \psi$. It is easy to see that $\overline{Z(\varphi + i \psi)} \simeq Z(\varphi - i \psi)$. We then obtain the following result.


\begin{theorem}\label{thm:conjugate}
 There is an infinite-dimensional cone $\mathcal{C}$ of bi-Lipschitz maps from $[0, \infty)$ into $\C$ such that for every $\zeta \in \mathcal{C}$, $Z(\varphi)$ is a singular twisted Hilbert space isomorphic to its dual, but not to its conjugate dual. Furthermore, if $\zeta_1, \zeta_2 \in \mathcal{C}$ and $\zeta_1$ is not a multiple of $\zeta_2$ then $Z(\zeta_1)$ is incomparable to $Z(\zeta_2)$. Each space $Z(\zeta)$ is defined through the differential process of complex interpolation of a family of four Banach spaces distributed in arches of the unit circle.
\end{theorem}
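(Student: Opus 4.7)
The plan is to reproduce the recipe of Section~\ref{sec:coneable} now over $\C$, by coupling two independent copies of the real construction of Theorem~\ref{bidsconeable}. I would start from the infinite $\Q$-linearly independent set $\mathcal{B}\subset(0,1)$ used there, split it into two disjoint infinite pieces $\mathcal{B}_1,\mathcal{B}_2$, and let $\mathcal{D}_1,\mathcal{D}_2\subset\Ll_{bids}$ be the corresponding real convex cones generated (with $g=\sin$) by $\{g_{10^{-1},2\pi\beta\log 2}:\beta\in\mathcal{B}_i\}$, $i=1,2$. Then set
\[
\mathcal{C}:=\{\varphi+i\psi:\varphi\in\mathcal{D}_1\cup\{0\},\ \psi\in\mathcal{D}_2\cup\{0\},\ (\varphi,\psi)\neq(0,0)\}.
\]
This is a convex cone in the real vector space of $\C$-valued maps, infinite-dimensional because the real generators together with their $i$-multiples form an $\R$-linearly independent subset of $\mathcal{C}$. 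Every $\zeta=\varphi+i\psi\in\mathcal{C}$ is bi-Lipschitz as a $\C$-valued map (the lower Lipschitz bound transfers from either nonzero component), and the definitions of $\Ll_{bi}$, $\Ll_{bid}$, $\Ll_{bis}$ extend verbatim to $\C$-valued functions, so Lemma~\ref{lem:Orlicz_subspace}, Theorem~\ref{thm:singularity} and Theorem~\ref{thm:distinguishing} apply with the same proofs. In particular, each $Z(\zeta)$ is a singular (and cosingular) twisted Hilbert space.

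For the self-duality I would combine the identification $Z(\zeta)^*\simeq Z(-\zeta)$ from Section~\ref{Sec:Background} with the trivial projective equivalence of $\zeta$ and $-\zeta$ (take $c=-1$) to get $Z(\zeta)\simeq Z(\zeta)^*$. For the failure of isomorphism with the conjugate dual, the relation $\overline{Z(\zeta)}\simeq Z(\bar\zeta)$ gives $\overline{Z(\zeta)^*}\simeq Z(-\bar\zeta)=Z(-\varphi+i\psi)$, so the task reduces to showing that $\zeta$ and $-\bar\zeta$ are not projectively equivalent over $\C$. Writing a putative scalar $c=a+bi\in\C\setminus\{0\}$ and expanding
\[
\zeta+c\bar\zeta=((1+a)\varphi+b\psi)+i\bigl((1-a)\psi+b\varphi\bigr),
\]
boundedness of both real and imaginary parts forces either the case $b=0$ (which requires the contradictory equalities $1+a=0$ and $1-a=0$, as both $\varphi$ and $\psi$ are unbounded) or, when $b\neq 0$, a nontrivial $\R$-linear combination of $\varphi$ and $\psi$ to be bounded. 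The latter is blocked because the generators of $\mathcal{D}_1$ and $\mathcal{D}_2$ have the form $x\mapsto x(1+10^{-1}\sin(\beta_j'\log x))$ with the $\beta_j'$ pairwise distinct and $\Q$-linearly independent; Kronecker's theorem applied exactly as in the proof of Theorem~\ref{bidsconeable} shows that any real-linear combination of these generators is unbounded unless all coefficients vanish.

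The incomparability clause for $\zeta_1,\zeta_2\in\mathcal{C}$ is handled by the same scheme: if $\zeta_1\neq c\zeta_2$ for every $c\in\C\setminus\{0\}$, the same Kronecker-based obstruction shows that $\zeta_1$ and $\zeta_2$ are not projectively equivalent, and the $\C$-valued extension of Theorem~\ref{thm:distinguishing} (whose proof uses only the bi-Lipschitz structure, never the real-valuedness of the maps) yields the incomparability of $Z(\zeta_1)$ and $Z(\zeta_2)$. For the final clause, I would realize each $Z(\zeta)$ as the derivation at $0$ of a complex interpolation scale built from four weighted copies of $\ell_2$ placed on four symmetric arcs of $\partial\D$, the weights encoding $\pm\varphi$ on one pair of arcs and $\pm\psi$ on the other, as in the interpolation calculus of~\cite{HigherOrderUs}. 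The first three steps are direct transfers of the real case, so I expect the main obstacle to be this last identification: the two-endpoint representation of $Z_2$ produces only $\R$-valued derivations, and one must verify that a symmetric four-arc family (two arcs contributing the real $\varphi$-direction, two the imaginary $\psi$-direction) reproduces exactly $\Omega_\zeta$ rather than a merely equivalent quasi-linear map.
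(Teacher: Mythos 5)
Your overall route is the paper's: take real $\varphi\in\mathcal{D}_1$, $\psi\in\mathcal{D}_2$ built from disjoint $\Q$-independent frequency sets as in Theorem~\ref{bidsconeable}, form $\zeta=\varphi+i\psi$, use $Z(\zeta)^*\simeq Z(-\zeta)\simeq Z(\zeta)$ and $\overline{Z(\zeta)^*}\simeq Z(-\bar\zeta)$, and separate spaces via the complex-valued version of Theorem~\ref{thm:distinguishing} together with the Kronecker obstruction to projective equivalence. However, there is a concrete error in your definition of the cone: you allow $(\varphi,\psi)$ with exactly one coordinate equal to $0$. For such elements the conjugate-dual conclusion fails. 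If $\psi=0$ then $\zeta=\varphi$ is real-valued and $\overline{Z(\zeta)^*}\simeq Z(-\bar\zeta)=Z(-\varphi)\simeq Z(\varphi)^*\simeq Z(\varphi)$; if $\varphi=0$ then $-\bar\zeta=-\overline{i\psi}=i\psi=\zeta$, so the conjugate dual is again isomorphic to $Z(\zeta)$. Your own case analysis of $\zeta+c\bar\zeta$ confirms this: with $\psi=0$ the choice $c=-1$ makes $\zeta+c\bar\zeta=0$. The fix is to take $\mathcal{C}=\{\varphi+i\psi:\varphi\in\mathcal{D}_1,\ \psi\in\mathcal{D}_2\}$ (both parts nonzero), which is still a convex cone with infinite-dimensional span, or the cone generated by $\{g_{10^{-1},2\pi\beta_k\log 2}+i\,g_{10^{-1},2\pi\gamma_k\log 2}\}_k$ for a pairing of $\mathcal{B}_1$ with $\mathcal{B}_2$. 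Relatedly, be explicit that ``not projectively equivalent'' yields ``not isomorphic'' only through Theorem~\ref{thm:distinguishing}, so you must check $\zeta\in\Ll_{bid}$ and $-\bar\zeta\in\Ll_{bis}$ for complex-valued maps; this does hold (e.g.\ $|A_\zeta|\ge|A_\varphi|$ for the difference quotients in the definition of $\Ll_{bid}$, and the Taylor step in Claim~2 should use the integral form of the remainder rather than the mean-value form, which is unavailable for complex-valued functions).

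On the final clause, you flag the four-arc interpolation representation as the main obstacle and worry about obtaining exactly $\Omega_\zeta$ rather than an equivalent map. The paper does not require exactness: it invokes \cite[Theorem 6.11]{Singular} to produce sequence spaces $X,Y$ with $\Omega_X$ and $\Omega_Y$ projectively equivalent to $\Omega_\varphi$ and $\Omega_\psi$, and then the Butterfly Lemma of \cite{Butterfly_Lemma} to identify the centralizer induced at $0$ by the configuration $X,Y,X^*,Y^*$ on the four arcs with $\Omega_\varphi+i\Omega_\psi$ up to projective equivalence, which is all that is needed since projectively equivalent quasi-linear maps give isomorphic twisted sums. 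So this part is resolved by citation rather than by the new computation you anticipate; as written, your proposal leaves it open.
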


To explain the final part of Theorem \ref{thm:conjugate}, let us describe the theory of complex interpolation of Banach spaces. Recall that Bourgain's example mentioned above uses interpolation. Curiously, there is a deep connection between the spaces $Z(\zeta)$ and interpolation.
The reader is referred to  \cite{BerghLofstrom, Calderon1964, COIFMAN1982203, KaltonKothe}.

Let $\zeta = \varphi + i \psi$ with $\varphi, \psi \in \Ll_{bids}$ real-valued. 
There are sequence spaces $X$ and $Y$ such that $\Omega_X$ is projectively equivalent to $\Omega_{\varphi}$, and $\Omega_Y$ is projectively equivalent to $\Omega_{\psi}$ (see \cite[Theorem 6.11]{Singular} for details). 
According to \cite{Butterfly_Lemma}, $\Omega_{\varphi} + i \Omega_{\psi}$ is projectively equivalent to the centralizer on $\ell_2$ induced at $0$ from complex interpolation of the configuration of spaces at the unit circle in Figure~\ref{conf1}.
\begin{figure}[ht]
\begin{center}
\begin{tikzpicture}
   \draw (0,0) circle (2cm);     
   \draw (0,-1.8) -- (0,-2.2);   
   \draw (1.8,0) -- (2.2,0);     
   \draw (0,1.8) -- (0,2.2);     
   \draw (-1.8,0) -- (-2.2,0);   
   \draw (-2,2) node {$X$};
   \draw (2,-2) node {$X^*$};
   \draw (2,2) node {$Y$};
   \draw (-2,-2) node {$Y^*$};
\end{tikzpicture}
\caption{This configuration generates a centralizer projectively equivalent to $\Omega_{\varphi} + i \Omega_{\psi}$.}\label{conf1}
\end{center}
\end{figure}
Let us call that configuration $\mathcal{Z}$. 
The configuration in Figure~\ref{conf2} (which we call $\mathcal{Z}^*$) induces $-\Omega_{\varphi} - i\Omega_{\psi}$.
\begin{figure}[ht]
\begin{center}
\begin{tikzpicture}
   \draw (0,0) circle (2cm);     
   \draw (0,-1.8) -- (0,-2.2);   
   \draw (1.8,0) -- (2.2,0);     
   \draw (0,1.8) -- (0,2.2);     
   \draw (-1.8,0) -- (-2.2,0);   
   \draw (-2,2) node {$X^*$};
   \draw (2,-2) node {$X$};
   \draw (2,2) node {$Y^*$};
   \draw (-2,-2) node {$Y$};
\end{tikzpicture}
\caption{This configuration generates a centralizer projectively equivalent to $-\Omega_{\varphi} - i\Omega_{\psi}$.}\label{conf2}
\end{center}
\end{figure}
Notice that the configuration $\mathcal{Z}^*$ is obtained from the configuration $\mathcal{Z}$ by the map $z \mapsto -z$. 
Since $f$ is analytic if and only if $z \mapsto f(-z)$ is analytic, we obtain an isometry between $\mathscr{F}\left(\mathcal Z\right)$ and $\mathscr{F}\left(\mathcal{Z}^*\right)$, which in turn induces an isometry between $Z(\varphi + i \psi)$ and $Z(-\varphi - i \psi)$.
Lastly, $\Omega_{\varphi} - i\Omega_{\psi}$ is obtained through the configuration of spaces in Figure~\ref{conf3} (which we call $\overline{\mathcal{Z}}$)
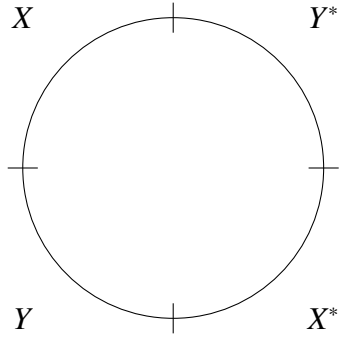
\begin{figure}[ht]
\begin{center}
\begin{tikzpicture}
   \draw (0,0) circle (2cm);     
   \draw (0,-1.8) -- (0,-2.2);   
   \draw (1.8,0) -- (2.2,0);     
   \draw (0,1.8) -- (0,2.2);     
   \draw (-1.8,0) -- (-2.2,0);   
   \draw (-2,2) node {$X$};
   \draw (2,-2) node {$X^*$};
   \draw (2,2) node {$Y^*$};
   \draw (-2,-2) node {$Y$};
\end{tikzpicture}
\caption{This configuration generates a centralizer projectively equivalent to $\Omega_{\varphi} - i \Omega_{\psi}$.}\label{conf3}
\end{center}
\end{figure}
and we see that to obtain a natural isomorphism between $\mathscr{F}(\mathcal{Z})$ and $\mathscr{F}(\overline{\mathcal{Z}})$ we should have that $f$ is analytic if and only if $z \mapsto f(\overline{z})$ is analytic. Therefore, the fact that $Z(\zeta)$ is isomorphic to its dual but not to its conjugate dual is naturally related to the fact that $z \mapsto -z$ is analytic, while $z \mapsto \overline{z}$ is not.

\section{A characterization of the Kalton-Peck space}\label{sec:characterization}

As we have seen throughout the paper, we have built a large family of spaces $Z(\varphi)$ sharing the properties of $Z_2$, but not isomorphic to $Z_2$. In this section, we present a characterization of the Kalton-Peck space among all twisted Hilbert spaces of the form $Z(\varphi)$. This provides a partial answer to a conjecture by Cabello and Castillo (see \cite[page 517]{Homological}).

Recall that if $X$ is a Banach space with normalized basis $(x_n)$ and $(x_n)$ is equivalent to all of its normalized block bases, then, $(x_n)$ is equivalent to the canonical basis of either $c_0$ or of some $\ell_p$ space ($1 \leq p < \infty)$ (see \cite{Zippin}).

The Kalton-Peck space satisfies the following similarity property (see \cite[page 517]{Homological}). Let $X$ be a twisted Hilbert space. Given a normalized block basis $(u_n)$ of the canonical basis of $\ell_2$, let $U = [u_n]_n = \overline{\mbox{span}} \{u_n : n \in \N\}$ and let $T_U : \ell_2 \rightarrow U$ be given by $T_U(x) = \sum x_n u_n$. Let
\[
U(X) = \overline{\mbox{span}}\{(y, x) \in X : y, x \in \mbox{span}\{u_n : n \in \N\}\}
\]
We say that $X$ is \emph{self-similar with respect to $(u_n)$} if there is an isomorphism $T : X \rightarrow U(X)$ making the following diagram commutative
\[
\xymatrix{0 \ar[r] & \ell_2 \ar[r]\ar[d]^{T_U} & X \ar[r]\ar[d]^{T} & \ell_2 \ar[r]\ar[d]^{T_U} & 0 \\
0 \ar[r] & U \ar[r] & U(X) \ar[r] & U \ar[r] & 0}
\]
We say that $X$ is \emph{self-similar} if it is self-similar with respect to every normalized block basis $(u_n)$ of the canonical basis of $\ell_2$.
 
\vspace{0.2cm}
It is clear that $X = \ell_2 \oplus \ell_2$ is self-similar. As mentioned, $Z_2$ is self-similar as well. In the language of \cite{Symplectic, Structure_Rochberg}, the operator $T$ appearing above for $X = Z_2$ is a \emph{block operator} (see also \cite{Group_Actions}). Our goal here is to prove that the Kalton-Peck space $Z_2$ is the unique nontrivial self-similar twisted Hilbert space of the form $Z(\varphi)$. The proof starts by essentially showing that if $X = Z(\varphi)$ is self-similar, then we can take the operator $T$ to be a block operator.

\begin{lemma}\label{lem:L_exists}
The twisted Hilbert space $Z(\varphi)$ is self-similar with respect to $(u_n)$ if and only if there is a linear map $L : \ell_2 \rightarrow U$ and $C > 0$ such that
\[
\|T_U(\Omega_{\varphi}x) - \Omega_{\varphi}(T_U x) + Lx\|_2 \leq C \|x\|_2
\]
for every $x \in c_{00}$. Furthermore, $\|T\| \leq 1 + C$.
\end{lemma}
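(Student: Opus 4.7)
The plan is to reduce both directions of the equivalence to the explicit block-triangular form
\[
T(y,x) = (T_U y + Lx,\ T_U x)
\]
on the dense subspace $\ell_2 \times c_{00} \subset Z(\varphi)$. Under this reduction, the entire isomorphism question collapses to the single estimate displayed in the lemma, since $T_U$ is an isometry of $\ell_2$ onto $U$ (the sequence $(u_n)$ being a normalized block basis of the canonical orthonormal basis).

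For the backward direction, suppose $L$ and $C$ are given. Define $T$ on $\ell_2 \times c_{00}$ by the formula above and expand
\[
\|T(y,x)\|_\varphi = \|T_U(y - \Omega_\varphi x) + (T_U \Omega_\varphi x - \Omega_\varphi T_U x + Lx)\|_2 + \|T_U x\|_2.
\]
Using that $T_U$ is an isometry and applying the triangle inequality with the hypothesis, one gets $\|T(y,x)\|_\varphi \leq (1+C)\|(y,x)\|_\varphi$, which simultaneously gives the continuous extension of $T$ to all of $Z(\varphi)$ and the promised norm bound $\|T\| \leq 1+C$. The commutativity of both squares of the diagram is immediate from the formula. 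To invert $T$, I would write down the candidate
\[
T^{-1}(y',x') = \bigl(T_U^{-1}(y' - L T_U^{-1} x'),\ T_U^{-1} x'\bigr)
\]
on the corresponding dense subspace of $U(Z(\varphi))$ and apply the \emph{same} algebraic identity in reverse to get a matching boundedness estimate, so that $T^{-1}$ also extends continuously.

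For the forward direction, assume $T: Z(\varphi) \to U(Z(\varphi))$ is an isomorphism realizing the commutative diagram. The right square forces the second coordinate of $T(y,x)$ to equal $T_U x$; the left square combined with linearity forces $T(y,0) = (T_U y,0)$. Writing $(y,x) = (y,0) + (0,x)$ and setting $L(x) := \pi_1 T(0,x)$ (the first coordinate of $T(0,x)$, which lies in $U$), linearity of $T$ yields $T(y,x) = (T_U y + Lx,\ T_U x)$ for $x \in c_{00}$; one then extends $L$ linearly to all of $\ell_2$ by an algebraic (e.g.\ Hamel-basis) choice, which is legitimate since the lemma only requires the estimate for $x \in c_{00}$. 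To extract that estimate, test $\|T(y,x)\|_\varphi \leq \|T\|\|(y,x)\|_\varphi$ at $y = \Omega_\varphi x$: the first coordinate of $T(y,x)$ becomes exactly $T_U \Omega_\varphi x - \Omega_\varphi T_U x + Lx$, while $\|(\Omega_\varphi x,x)\|_\varphi = \|x\|_2$; after cancelling $\|T_U x\|_2 = \|x\|_2$ from the left-hand side, the required inequality falls out with $C = \|T\| - 1$.

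The main subtlety rather than obstacle is bookkeeping around the completion: one must ensure that the map defined on the dense subspace $\ell_2 \times c_{00}$ in the sufficiency direction actually lands in $U(Z(\varphi))$ (not merely in $Z(\varphi)$), and that the continuous extension respects the diagram. Both points follow from the block structure of $T$ with respect to the 2-dimensional UFDD $(E_n)$ of Section~\ref{Sec:Background}, since $T$ sends $E_n$ into the span of the blocks $[u_k]_{k \in \mathrm{supp}(u_n)}$ and these blocks generate $U(Z(\varphi))$.
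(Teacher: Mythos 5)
Your proof is correct and takes essentially the same route as the paper's: both directions reduce to the block form $T(y,x)=(T_U y+Lx,\,T_U x)$, the forward estimate is obtained by testing $T$ on $(\Omega_\varphi x,x)$ (whose quasi-norm is $\|x\|_2$), and the backward bound $\|T\|\leq 1+C$ comes from the triangle inequality exactly as in the paper. Your explicit verification that $T^{-1}$ is bounded, which the paper leaves implicit, is a welcome extra detail.
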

\begin{proof}
Suppose that $Z(\varphi)$ is self-similar with respect to $(u_n)$ and let $T$ be a witness for $(u_n)$. It follows that $T$ necessarily has the form $T(y, x) = (T_U y + Lx, T_U x)$. Let $x \in c_{00}$. We have that
\begin{eqnarray*}
\|T_U(\Omega_{\varphi}x) - \Omega_{\varphi}(T_U x) + Lx\|_2 \leq \|T(\Omega_{\varphi} x, x)\|_{Z(\varphi)} \leq \|T\| \|x\|_2.
\end{eqnarray*}
On the other hand, if
\[
\|T_U(\Omega_{\varphi}x) - \Omega_{\varphi}(T_U x) + Lx\|_2 \leq C \|x\|_2
\]
for every $x \in c_{00}$, define $T(y, x) = (T_U y + Lx, T_U x)$ for $x, y \in c_{00}$. We have
\begin{eqnarray*}
\|T(y, x)\|_{Z(\varphi)} & = & \|T_U(y) + Lx - \Omega_{\varphi}(T_Ux)\|_2 + \|T_Ux\|_2 \\
& \leq & \|(y, x)\|_{Z(\varphi)} + \|T_U(\Omega_{\varphi}x) - \Omega_{\varphi}(T_U x) + Lx\|_2 \\
& \leq & (1 + C) \|(y, x)\|_{Z(\varphi)}
\end{eqnarray*}
as we wanted to prove.
\end{proof}

\begin{lemma}
Let $u_n = \sum\limits_{k=p_n + 1}^{p_{n+1}} u_n(k) e_k$. If $Z(\varphi)$ is self-similar with respect to $(u_n)$ we may replace the map $L$ from Lemma \ref{lem:L_exists} by $L'$ given by
\[
L'(e_n) = \sum\limits_{k=p_n + 1}^{p_{n+1}} \varphi\left(\log \frac{1}{\left|u_n(k)\right|}\right) u_n(k) e_k
\]
replacing $C$ by $2C$.
\end{lemma}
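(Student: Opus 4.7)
The plan is to deduce the estimate for $L'$ from that of $L$ via the triangle inequality, which reduces the task to showing that the linear map $L - L'$ has operator norm at most $C$ on $c_{00} \subset \ell_2$. The first step is a direct computation at $x = e_n$. Since $\|e_n\|_2 = 1$, we have $\Omega_\varphi(e_n) = \varphi(0)\, e_n = 0$, while $T_U e_n = u_n$ has norm $1$, so
\[
\Omega_\varphi(T_U e_n) = \sum_{k = p_n + 1}^{p_{n+1}} u_n(k)\, \varphi\!\left(\log \frac{1}{|u_n(k)|}\right) e_k = L'(e_n).
\]
Applying the hypothesis of Lemma \ref{lem:L_exists} at $x = e_n$ thus gives $\|L(e_n) - L'(e_n)\|_2 \leq C$ for every $n$, i.e.\ the ``columns'' of $L - L'$ are uniformly bounded in $\ell_2$.

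Since both $L$ and $L'$ are linear on $c_{00}$, one has $(L - L')(x) = \sum_n x_n\,\bigl(L(e_n) - L'(e_n)\bigr)$. A pointwise column bound is not by itself enough for an operator-norm bound, so the strategy is to exploit the disjoint-support structure: by construction $L'(e_n)$ is supported in the $n$-th block $\{e_{p_n + 1}, \ldots, e_{p_{n+1}}\}$, and the plan is to reduce to the case in which $L(e_n)$ also lies in the $n$-th block. Concretely, I would replace $L$ with its block-diagonal compression $\widehat L(e_n) := P_n L(e_n)$, where $P_n$ is the orthogonal projection onto the $n$-th block, and verify that $\widehat L$ still satisfies the Lemma \ref{lem:L_exists} estimate (possibly up to an additional bounded factor). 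Once this is done, $L - L'$ is block-diagonal, and the mutual orthogonality of the blocks converts the pointwise column bound directly into
\[
\|(L - L')(x)\|_2^2 = \sum_n |x_n|^2\, \|L(e_n) - L'(e_n)\|_2^2 \leq C^2 \|x\|_2^2.
\]
The triangle inequality then yields $\|T_U \Omega_\varphi(x) - \Omega_\varphi(T_U x) + L'(x)\|_2 \leq 2C\|x\|_2$, as wanted.

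The main obstacle is precisely the block-diagonalization step: showing that the ``off-block'' columns $(I - P_n)L(e_n)$ can be discarded without destroying the estimate from Lemma \ref{lem:L_exists}. I plan to handle this by testing the hypothesis at sign-weighted indicator sums $x = \sum_{n \in A} \varepsilon_n e_n$, averaging over $\varepsilon \in \{\pm 1\}^A$, and using the fact already observed in Section \ref{sec:characterization} that the block decomposition of $T_U\Omega_\varphi(x) - \Omega_\varphi(T_U x)$ sees only the coordinate $x_n$ inside each block, so that randomization forces the off-block contributions of $L$ to be controlled in the required averaged sense. Tracking constants carefully through this randomization is the technical heart of the argument.
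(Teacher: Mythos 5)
Your outline is essentially the paper's argument, and the parts you actually carry out are correct: the computation $\Omega_\varphi(e_n)=0$ and $\Omega_\varphi(T_Ue_n)=L'(e_n)$, giving the column bound $\|L(e_n)-L'(e_n)\|_2\le C$; the observation that column bounds alone do not give an operator bound; and the final orthogonality-plus-triangle-inequality step yielding $2C$ once $L$ has been replaced by a block-diagonal map. However, the one step you explicitly defer --- that the compression $\widehat L(e_n):=P_nL(e_n)$ still satisfies the estimate of Lemma \ref{lem:L_exists} --- is where all the content of the lemma lies, so as written the proof is incomplete. Moreover, the randomization you sketch (testing the hypothesis at sign-weighted indicator sums $\sum_{n\in A}\varepsilon_ne_n$) is aimed at the wrong object: you need the estimate for $\widehat L$ at an \emph{arbitrary} $x\in c_{00}$, so the signs must be applied to that $x$, not to indicator vectors.

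The mechanism that closes the gap (and is what the paper actually uses, via an invariant mean on $\{\pm1\}^{\N}$) is sign-equivariance of the defect. For $v\in\{\pm1\}^{\N}$ let $S_U(v)$ be the sign pattern that repeats $v_n$ over the $n$-th block. Since $\Omega_\varphi$ depends only on the moduli of the coordinates, $\Omega_\varphi(vx)=v\,\Omega_\varphi(x)$ and $\Omega_\varphi(S_U(v)y)=S_U(v)\Omega_\varphi(y)$; also $T_U(vx)=S_U(v)T_Ux$. Hence
\[
S_U(v)\bigl[T_U\Omega_\varphi(vx)-\Omega_\varphi(T_U(vx))+L(vx)\bigr]
= T_U\Omega_\varphi(x)-\Omega_\varphi(T_Ux)+S_U(v)L(vx),
\]
and the left-hand side has norm at most $C\|vx\|_2=C\|x\|_2$. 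Averaging over $v$ (a finite average over the signs in the support of $x$ suffices for $x\in c_{00}$) replaces $L$ by $\Lambda(x)=\mathbb{E}_v\,S_U(v)L(vx)$ with the \emph{same} constant $C$ --- not merely ``up to an additional bounded factor'' --- and a direct computation shows $\Lambda(e_n)=P_nL(e_n)=\widehat L(e_n)$, since the off-block coordinates average to zero. With this inserted, your argument goes through and gives exactly the constant $2C$.
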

\begin{proof}
Let $L$ be given by Lemma \ref{lem:L_exists} and let $\mathcal{U} = \{+1,-1\}^{\N}$ be the group of units of real valued $\ell_{\infty}$. Following \cite[Lemma 3.15]{Singular}, define
\[
\Lambda(x) = \int_{\mathcal{U}} S_U(v) L(vx) dm
\]
for $x \in \ell_2$, where $S_U(e_n) = \sum\limits_{j = p_n + 1}^{p_{n+1}} e_j$ and $m$ is a left invariant finitely additive mean on $\mathcal{U}$. Notice that $\Lambda$ is linear and for $x \in c_{00}$
\begin{eqnarray*}
\|T_U(\Omega_{\varphi}x) - \Omega_{\varphi}(T_U x) + \Lambda(x)\|_2 & = & \left\| \int_{\mathcal{U}} S_U(v) [T_U(\Omega_{\varphi}(vx)) - \Omega_{\varphi}(T_U (vx)) + L(vx)] dm \right\|_2 \\
& \leq & \|T_U(\Omega_{\varphi}x) - \Omega_{\varphi}(T_U x) + L(x)\|_2 \\
& \leq & C \|x\|_2.
\end{eqnarray*}
Also, if $w \in \mathcal{U}$, then
\[
\Lambda(w x) = \int_{\mathcal{U}} S_U(v) L(v wx) dm(v) = \int_{\mathcal{U}} S_U(w) S_U(vw) L(vwx) dm(v) = S_U(w) \Lambda(x). 
\]
Thus $\Lambda(e_n) = \Lambda(e_n e_n) = S_U(e_n) \Lambda(e_n)$ for every $n$. It follows that the support of $\Lambda(e_n)$ is contained in the support of $u_n$, so that we may write
\[
\Lambda(e_n) = \sum\limits_{k= p_n + 1}^{p_{n+1}} \lambda_n(k) u_n(k) e_k.
\]
Since
\[
T_U(\Omega_{\varphi}e_n) - \Omega_{\varphi}(T_U e_n) = - \sum\limits_{k= p_n + 1}^{p_{n+1}} u_n(k) \varphi\left(\log \frac{1}{\left|u_n(k)\right|}\right) e_k
\]
we obtain
\[
\left\|\sum\limits_{k= p_n + 1}^{p_{n+1}} u_n(k) \left(\lambda_n(k) -  \varphi\left(\log \frac{1}{\left|u_n(k)\right|}\right)\right) e_k\right\|_2 \leq C
\]
for every $n$. Finally, let $x \in c_{00}$ and define
\[
L'(x) = \sum\limits_n x_n \sum\limits_{k= p_n + 1}^{p_{n+1}} u_n(k) \varphi\left(\log \frac{1}{\left|u_n(k)\right|}\right) e_k.
\]
Let us see that we may replace $L$ by $L'$:
\begin{eqnarray*}
\|T_U(\Omega_{\varphi}x) - \Omega_{\varphi}(T_U x) + L'(x) \|_2 & \leq & C \|x\|_2 + \|\Lambda(x) - L'(x)\|_2 \\
& = & C \|x\|_2 + \left\|\sum\limits_n x_n \sum\limits_{k= p_n + 1}^{p_{n+1}} u_n(k) \left(\lambda_n(k) -  \varphi\left(\log \frac{1}{\left|u_n(k)\right|}\right)\right) e_k\right\|_2 \\
& \leq & 2 C \|x\|_2
\end{eqnarray*}
since the $u_n$'s are disjointly supported.
\end{proof}

According to Lemma \ref{lem:L_exists}, associated to $L'$ there is a witness that $Z(\varphi)$ is self-similar with respect to $(u_n)$. Let us denote that witness by $T_{U, U}$ and call it \emph{the canonical witness for $(u_n)$}.

\begin{lemma}
If $Z(\varphi)$ is self-similar, then there is a constant $C$ independent of the normalized block basis $(u_n)$ such that for every $x \in c_{00}$ we have
\begin{equation*} 
\left\| \sum \limits_n x_n \sum\limits_{k= p_n + 1}^{p_{n+1}} u_n(k) \left[\varphi\left(\log \frac{\|x\|_2}{\left|x_n\right|}\right) + \varphi\left(\log \frac{1}{\left|u_n(k)\right|}\right) - \varphi\left(\log \frac{\|x\|_2}{\left|u_n(k) x_n\right|}\right)\right] e_k \right\|_2 \leq C \|x\|_2.
\end{equation*} 
\end{lemma}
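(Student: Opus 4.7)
The plan is to recognize that the expression inside the norm on the left-hand side is coordinate-wise equal to $T_U(\Omega_{\varphi}x) - \Omega_{\varphi}(T_U x) + L'(x)$, with $T_U$ and $L'$ as in the preceding lemma. Since $(u_n)$ is disjointly supported with $\|u_n\|_2 = 1$, $T_U$ is an isometry and $\|T_U x\|_2 = \|x\|_2$, so for $k \in \{p_n+1,\ldots,p_{n+1}\}$ the $k$-th coordinate of $T_U(\Omega_{\varphi} x)$ is $x_n u_n(k)\varphi(\log \tfrac{\|x\|_2}{|x_n|})$, that of $\Omega_{\varphi}(T_U x)$ is $x_n u_n(k)\varphi(\log \tfrac{\|x\|_2}{|x_n u_n(k)|})$, and that of $L'(x)$ is $x_n u_n(k)\varphi(\log \tfrac{1}{|u_n(k)|})$. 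The sum of these reproduces exactly the bracketed expression in the statement. Hence Lemma~\ref{lem:L_exists} (applied with the canonical witness) already yields, for each normalized block basis $(u_n)$, some constant $C(\mathbf u)$ making the inequality hold with $\|x\|_2$-factor.

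The substantive task is therefore to upgrade the per-block-basis constant $C(\mathbf u)$ into a constant uniform in $(u_n)$. I would argue by contradiction. Suppose normalized block bases $(u_n^{(m)})_n$ and vectors $x^m \in c_{00}$ with $\|x^m\|_2 = 1$ exist such that the quantity on the left-hand side, evaluated at $(u_n^{(m)})$ and $x^m$, exceeds $m$ for each $m$. Since $x^m$ has finite support, only finitely many $u_n^{(m)}$ are relevant; after truncating and translating along $\N$ I may assume the supports of the various truncated block bases are pairwise disjoint and occur on successively higher intervals of $\N$. Concatenating them in order produces a single normalized block sequence $(v_j)_j$ of the canonical basis of $\ell_2$.

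Now the self-similarity hypothesis applied to $(v_j)$ furnishes a finite constant $C(\mathbf v)$ for which the inequality under consideration holds. For each $m$, let $y^m \in c_{00}$ be the vector whose nonzero entries equal those of $x^m$, placed at the indices $j$ corresponding to the $m$-th batch in $(v_j)$. Then $\|y^m\|_2 = \|x^m\|_2 = 1$ and, because the bracketed expression depends only on the scalars $|x_n|$, $|u_n(k)|$ and $\|x\|_2$ — all of which are preserved by the translation and unaffected by the other batches thanks to the disjoint supports — the left-hand side evaluated at $(v_j)$ and $y^m$ coincides with the original left-hand side for $(u_n^{(m)})$ and $x^m$, and hence exceeds $m$. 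Choosing $m > C(\mathbf v)$ produces the desired contradiction, so $C(\mathbf u)$ is uniformly bounded.

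The main obstacle is the bookkeeping in the gluing step: one must check that the concatenated sequence $(v_j)$ really is a normalized block sequence with respect to the canonical basis, and that the disjointness of supports forces the computation for $y^m$ to factor through the data $(u_n^{(m)}, x^m)$ without any cross terms. This is routine once the translations are chosen to preserve both the block structure and the $\ell_2$-normalization, but it is the only nontrivial step of the proof; the identification in the first paragraph and the invocation of self-similarity in the third are direct.
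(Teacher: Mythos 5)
Your proposal is correct and follows essentially the same route as the paper: identify the expression as $T_U(\Omega_{\varphi}x) - \Omega_{\varphi}(T_U x) + L'(x)$, argue by contradiction, and amalgamate the witnesses of failure into a single normalized block basis to which self-similarity is then applied. Your truncate-translate-concatenate bookkeeping is just a more explicit rendering of the paper's amalgamation via a bijection $\N \to \N\times\N$.
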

\begin{proof}
Notice that the expression inside the norm is simply $T_U(\Omega_{\varphi}x) - \Omega_{\varphi}(T_U x) + L'(x)$, where $L'$ is given by the previous lemma. Suppose the conclusion is false. That implies that for each $N$ we are able to find a normalized block basis $(u_n^N)_n$ of $\ell_2$ for which $\|T_{U^N, U^N}\| > N$, where $U^N = [u_n^N]_n$.

We may see $(u_n^N)_{n, N}$ as a normalized block basis for $\ell_2(\N \times \N)$. Use a bijection between $\N$ and $\N \times \N$ to transport this block basis to $\ell_2$, call it $(v_n)$ and let $V = [v_n]$. Notice that $T_{V, V}$ is an amalgamation of the maps $T_{U^N, U^N}$, and therefore should be unbounded, contradicting the fact that $Z(\varphi)$ is self-similar.
\end{proof}

\begin{theorem}
If $Z(\varphi)$ is nontrivial and self-similar, then it is projectively equivalent (and therefore isomorphic) to the Kalton-Peck space $Z_2$.
\end{theorem}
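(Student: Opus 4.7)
The plan is to apply the inequality from the preceding lemma to a single carefully designed pair $(x,(u_n))$ so that it collapses to an approximate Cauchy functional equation for $\varphi$, and then invoke the classical Hyers--Ulam stability theorem.

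For fixed positive integers $N, M$, take
\[
x := \sum_{n=1}^{N} e_n \in c_{00}, \qquad u_n := \frac{1}{\sqrt{M}} \sum_{k=(n-1)M+1}^{nM} e_k \quad (n \geq 1),
\]
so that $(u_n)$ is a normalized block basis of the canonical basis of $\ell_2$ and $\|x\|_2 = \sqrt{N}$. For every $n \in \{1, \dots, N\}$ and every $k$ in the support of $u_n$ one has
\[
\log \frac{\|x\|_2}{|x_n|} = \tfrac{1}{2}\log N, \qquad \log \frac{1}{|u_n(k)|} = \tfrac{1}{2}\log M, \qquad \log \frac{\|x\|_2}{|u_n(k) x_n|} = \tfrac{1}{2}\log(NM).
\]
The bracket in the preceding lemma is therefore the constant
\[
\Delta(N,M) := \varphi\!\left(\tfrac{1}{2}\log N\right) + \varphi\!\left(\tfrac{1}{2}\log M\right) - \varphi\!\left(\tfrac{1}{2}\log(NM)\right),
\]
and the vector inside the norm reduces to $\Delta(N,M) \sum_{n=1}^{N} u_n$, of $\ell_2$-norm $|\Delta(N,M)|\sqrt{N} = |\Delta(N,M)| \|x\|_2$. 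The lemma thus yields $|\Delta(N,M)| \leq C$ uniformly in $N, M$. Because $\{\tfrac{1}{2}\log N : N \in \N\}$ is $(\tfrac{1}{2}\log 2)$-dense in $[0,\infty)$ and $\varphi$ is Lipschitz, this extends to
\[
|\varphi(a+b) - \varphi(a) - \varphi(b)| \leq C' \qquad \text{for all } a, b \geq 0,
\]
for some constant $C'$.

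This is the Cauchy functional equation up to a bounded error, and the Hyers--Ulam theorem produces an additive function $\psi : [0,\infty) \to \K$ with $|\varphi - \psi| \leq C'$, namely $\psi(t) := \lim_{k \to \infty} 2^{-k} \varphi(2^k t)$ (the limit exists because $|2^{-(k+1)}\varphi(2^{k+1} t) - 2^{-k} \varphi(2^k t)| \leq C'/2^{k+1}$, which is summable). The Lipschitz property of $\varphi$ transfers boundedness on bounded intervals to $\psi$, and an additive function on $[0,\infty)$ with this property is linear, so $\psi(t) = c t$ with $c := \psi(1)$. As $Z(\varphi)$ is nontrivial, $\varphi$ is unbounded, forcing $c \neq 0$. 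Therefore $\sup_{t > 0} |\varphi(t) - ct| \leq C'$, which is precisely the condition that $\varphi$ be projectively equivalent to the identity map $t \mapsto t$; consequently $Z(\varphi)$ is projectively equivalent --- hence isomorphic --- to $Z_2$. The only genuinely novel step is the initial choice of test data: flat coefficients on both $x$ and on each $u_n$ are exactly what make the three logarithms in the preceding lemma equal $a$, $b$ and $a+b$ respectively, thereby reducing the full self-similarity estimate to an approximate additivity condition for $\varphi$. Everything that follows is a standard density argument together with a textbook application of Hyers' theorem.
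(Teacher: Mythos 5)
Your proposal is correct and follows essentially the same route as the paper: the same flat test data ($x$ a sum of $N$ unit vectors, $u_n$ normalized flat blocks of length $M$) collapsing the self-similarity estimate to $\left|\varphi\left(\tfrac{1}{2}\log N\right)+\varphi\left(\tfrac{1}{2}\log M\right)-\varphi\left(\tfrac{1}{2}\log NM\right)\right|\leq C$, followed by density of $\{\tfrac{1}{2}\log N\}$, the Lipschitz property, and Hyers' theorem. The only differences are cosmetic: you spell out Hyers' construction and the step $c\neq 0$ (from nontriviality of $Z(\varphi)$), which the paper leaves implicit.
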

\begin{proof}
Given $N \geq 1$, let $
u_n^N = \frac{1}{\sqrt{N}} \sum\limits_{j=(n-1)N + 1}^{nN} e_j$
and notice that $(u_n^N)_n$ is a normalized block basis for $(\ell_2)$. Take $M \geq 1$ and $x = \sum\limits_{j=1}^M e_j$ in the previous lemma to get that
\[
\left|\varphi\left(\log \sqrt{M}\right) + \varphi\left(\log \sqrt{N}\right) - \varphi\left(\log \sqrt{NM}\right)\right| \leq C
\]
for every $N, M \geq 1$. Since $\varphi$ is Lipschitz, we obtain that
\[
\sup_{a, b > 0} \left|\varphi(a + b) - \varphi(a) - \varphi(b)\right| < \infty.
\]
Following \cite{Hyers}, there exists $c = \lim_{t \rightarrow \infty} \frac{\varphi(t)}{t}$ and
\[
\sup_{t > 0} \left|\varphi(t) - ct\right| < \infty
\]
and therefore $Z(\varphi)$ is projectively equivalent to $Z_2$.
\end{proof}

As two-thirds of the authors of \cite{Homological}, we conjecture that the Kalton-Peck space $Z_2$ is, up to isomorphism, the only nontrivial self-similar twisted Hilbert space.



\vspace{0.2cm} 
\noindent 
{\bf Acknowledgments}: The authors express their gratitude to Jesús Castillo for his valuable feedback on an earlier draft of this manuscript. His diverse comments and suggestions have significantly improved the final version of this paper.

\vspace{0.2cm} 
\noindent 
{\bf Funding}: Willian Corrêa was supported by São Paulo Research Foundation (FAPESP), grants 2016/25574-8, 2021/13401-0, 2023/06973-2 and 2023/12916-1, and National Council for Scientific and Technological Development - CNPq - Brasil, grant 304990/2023-0. Sheldon Dantas was supported by the Spanish AEI Project PID2019 - 106529GB - I00 / AEI / 10.13039/501100011033, by
 Generalitat Valenciana project CIGE/2022/97 and 
by the grant PID2021-122126NB-C33 funded by 
MICIU/AEI/10.13039/501100011033 and by ERDF/EU.

\bibliographystyle{amsplain}
\bibliography{refs}

\end{document}